\documentclass[10pt,a4paper]{article}
\usepackage{latexsym,amssymb,amsmath,amsthm,amsfonts,amscd}
\usepackage{color,enumerate}
\usepackage[pdftex,final]{graphicx}
\usepackage{flafter}

\setlength{\parskip}{5pt} \setlength\arraycolsep{2pt}

\newtheorem{lemma}{Lemma}[section]
\newtheorem{proposition}[lemma]{Proposition}
\newtheorem{theorem}[lemma]{Theorem}

\newtheorem{definition}[lemma]{Definition}
\newtheorem{example}[lemma]{Example}

\newcommand{\ud}{\mathrm{d}}
\newcommand{\RR}{\mathbb{R}}
\newcommand{\f}{\frac}

\newcommand{\xx}{|x|^2}
\newcommand{\yy}{|y|^2}
\newcommand{\xy}{\langle x,y\rangle}

\newcommand{\pp}[2]{\frac{\partial{#1}}{\partial{#2}}}

\newcommand{\pppp}[4]%
  {\frac{\partial^3{#1}}{\partial{#2}\partial{#3}\partial{#4}}}

\newcommand{\p}{\phi}
\newcommand{\ps}{\phi(s)}
\newcommand{\pab}{\alpha\phi(\frac{\beta}{\alpha})}

\newcommand{\gab}{\alpha\phi(b^2,\frac{\beta}{\alpha})}

\newcommand{\sq}{\frac{(\alpha+\beta)^2}{\alpha}}

\renewcommand{\a}{\alpha}
\renewcommand{\b}{\beta}
\newcommand{\ab}{(\alpha,\beta)}
\newcommand{\ta}{\tilde\alpha}
\newcommand{\tb}{\tilde\beta}
\newcommand{\ha}{\hat\alpha}
\newcommand{\hb}{\hat\beta}
\newcommand{\ba}{\bar\alpha}
\newcommand{\bb}{\bar\beta}

\newcommand{\aij}{a_{ij}}

\newcommand{\bi}{b_i}
\newcommand{\bj}{b_j}

\newcommand{\hbi}{\hat b_i}

\newcommand{\bbi}{\bar b_i}
\newcommand{\bbj}{\bar b_j}

\newcommand{\bij}{b_{i|j}}
\newcommand{\taij}{\tilde a_{ij}}

\newcommand{\tbij}{\tilde b_{i|j}}
\newcommand{\haij}{\hat a_{ij}}

\newcommand{\hbij}{\hat b_{i|j}}
\newcommand{\baij}{\bar a_{ij}}

\newcommand{\bbij}{\bar b_{i|j}}

\newcommand{\G}{G^i_\alpha}
\newcommand{\tG}{\tilde G^i_{\tilde\alpha}}
\newcommand{\hG}{\hat G^i_{\hat\alpha}}
\newcommand{\bG}{\bar G^i_{\bar\alpha}}

\newcommand{\rij}{r_{ij}}
\newcommand{\sij}{s_{ij}}
\newcommand{\ri}{r_i}
\newcommand{\si}{s_i}
\newcommand{\rj}{r_j}
\newcommand{\sj}{s_j}

\begin{document}
\title{Deformations and Hilbert's Fourth Problem}
\author{Changtao Yu}
\date{}
\maketitle

\begin{abstract}
In this paper we study a class of Finsler metrics defined
by a Riemannian metric and an 1-form. We classify those of
projectively flat in dimension $n\geq3$ by a special class of deformations. The results show that the projective flatness of such kind of Finsler metrics always arises from that of some Riemannian metric.
\end{abstract}

\section{Introduction}
Hilbert's Fourth Problem asks to characterize all the intrinsic {\it quasimetrics}, namely those distance functions which satisfy all axioms for a metric with the possible exception of symmetry, on a subset in $\RR^n$ such that the straight line segment are the shortest paths\cite{Hi}. Since any intrinsic quasimetric induces a Finsler metric, Hilbert's Fourth Problem in the regular case is to study the Finsler metrics with straight lines as their geodesics, such kind of Finsler metrics is said to be {\it projectively flat}. This problem has been solved in Riemannian geometry by Beltrami. However, it is far from been solved for general Finsler metrics. The main results are given by Busemann and Pogorelov based on integral geometric\cite{Bu,Po} and Paiva based on symplectic geometry\cite{Pa}. But all of their research are just for the {\it absolutely homogeneous} Finsler metrics, which means that the distance functions determined by the metrics are {\it symmetric}.

The notion of projectively flatness is closely connected with the curvature properties of a Finsler metric. It is known that a Finsler metric is {\it locally} projectively flat if and only if $F$ is of vanishing Douglas curvature and Weyl curvature, both of which are invariants in projective Finsler geometry. Moreover, the locally projectively flat Finsler metrics must be of scalar flag curvature. Flag curvature for a Finsler metric is an analogue of the sectional curvature in Riemannian geometry. A Finsler metric is said to be of {\it scalar} or {\it constant} flag curvature if the flag curvature $K(P,y)$ is a scalar function of tangent vectors $y$ or a constant. According to Beltrami's theorem, a Riemannian metric is locally projectively flat if and only if it is of constant sectional curvature. This property doesn't hold for Finsler metrics in general, but at least it is clear that locally projectively flat Finsler metrics include all the constant curvature metrics. There are many inspiring researches on the related topics\cite{Br1,Br2,szm-pffm}.

Although Busemann provided a specific way to construct projectively flat metrics, the properties and structure of these metrics are not clear enough\cite{Pa}. But if we just consider a smaller class of Finsler metrics, it is possible to obtain some more direct descriptions illuminating the underlying geometry.

In this paper, we will focus on a special class of Finsler metrics called {\it $\ab$-metrics}. They are those Finsler metrics which are closest to Riemannian metrics in a sense. This kind of metrics are expressed as $F=\pab$ in terms of a Riemannian metric $\a=\sqrt{a_{ij}(x)y^iy^j}$, an $1$-form $\b=b_i(x)y^i$ and a smooth function $\ps$. $\ab$-metrics were proposed by M. Matsumoto as a generalization of Randers metrics, the later were first introduced by a physicist G. Randers in 1941 from the standpoint of general relativity.

In spite of the special form, $\ab$-metrics are natural in geometrical point of view. As we known that a Minkowski norm on a $n$-dimensional vector space is Euclidean if and only if it is preserved under the action of $O(n)$. The author and H. Zhu prove in \cite{YCT} that $\ab$-norms are those Minkowski norms which are preserved under the action of $O(n-1)$. In other words, the {\it indicatrix} of an $\ab$-norm is a rotation hypersurface with the rotation axis passing the origin. If the hypersurface is a hyperelliptic, then the corresponding norm is a Randers norm. And if the origin is just the center of the hyperelliptic additionally, then the norm is Euclidean. For an $\ab$-norm, the function $\ps$ includes all the geometrical information of the indicatrix.

Compared to general Finsler metrics, $\ab$-metrics has wonderful computability(see \cite{bacs-cxy-szm-curv} and references therein). Hence, the researches on $\ab$-metrics enrich Finsler geometry and the approaches offer references for further study. Some basic facts $\ab$-metrics are provided in Section \ref{basic}.

As the simplest kind of $\ab$-metrics in form, Randers metrics are expressed as $F=\a+\b$ with $\|\b\|_\a<1$. It is known that a Randers metric $F=\a+\b$ is locally projectively flat if and only if $\a$ is locally projectively flat,which implies $\a$ is of constant sectional curvature by Beltrami's theorem, and $\b$ is closed\cite{BM}. For instance, the well-known {\it Funk metric}
\begin{eqnarray}\label{funk}
F=\f{\sqrt{(1-|x|^2)|y|^2+\langle x,y\rangle^2}}{1-|x|^2}-\f{\langle x,y\rangle}{1-|x|^2}
\end{eqnarray}
is projectively flat on the unit ball $\mathbb B^n(1)$ with constant flag curvature $K=-\f{1}{4}$.

Another important kind of $\ab$-metrics was given below by L.~Berwald\cite{Be},
\begin{eqnarray}\label{BerwaldF}
F=\f{(\sqrt{(1-|x|^2)|y|^2+\langle x,y\rangle^2}+\langle x,y\rangle)^2}{(1-|x|^2)^2\sqrt{(1-|x|^2)|y|^2+\langle x,y\rangle^2}}.
\end{eqnarray}
It belongs to a special kind of $\ab$-metrics given in the form $F=\f{(\a+\b)^2}{\a}$ with $\|\b\|_\a<1$. Berwald's metric is also projectively flat on $\mathbb B^n(1)$ with vanishing flag curvature $K=0$.

Recent research given by D. Bao et al. shows the internal relationship between Randers metrics and Zermelo's navigation problem on Riemannian spaces\cite{db-robl-szm-zerm}. Consider a Riemannian manifold $(M,h)$ and a vector field $W$ with $|W|_h:=\sqrt{h(W,W)}<1$, then the paths of shortest travel time in the Riemannian manifold $(M,h)$ under the influence of the wind $W$ are just the geodesics of a Randers metric $F=\a+\b$, where $\a$ and $\b$ are determined by $h$ and $W$ as $$\a=\f{\sqrt{(1-|W|_h^2)h^2+(W^\flat)^2}}{1-|W|_h^2},\quad\b=-\f{W^\flat}{1-|W|_h^2}.$$
Conversely, the navigation data $(h,W)$ are given by
$$h=\sqrt{1-b^2}\sqrt{\a^2-\b^2},\quad W^\flat=-(1-b^2)\b,$$
where $b:=\|\b\|_\a$ is the norm of $\b$ with respect to $\a$. For the Funk metric, one can see that $h=|y|$ is just the standard Euclidian metric and $W=x^i\frac{\partial}{\partial x^i}$.

By using the navigation description for Randers metrics, the main result in \cite{db-robl-szm-zerm} says that a Randers metric is of constant flag curvature if and only if $h$ is of constant sectional curvature and $W$ is an infinitesimal homothety of $h$, which indicates {\it the constancy of the curvature for a Randers metric always arises from that of some Riemannian metric by doing the navigation deformations}.

The main purpose of these paper is to provide a luminous characterization for locally projectively flat $\ab$-metrics. One of our main results shows that the answer is much similar to the phenomenon for Randers metrics with constant curvature: {\it the projective flatness of an $\ab$-metric always arises from that of some Riemannian metric when dimension greater than 2}.

We will firstly discuss Berwald's metrics $F=\sq$ in Section \ref{sq} as the special case. There are two reasons to do so. Firstly, except for Randers metrics, many non-trivial results in Finsler geometry relate to Berwald's metrics. Both Randers metrics and Berwald's metrics are the rare Finsler metrics of excellent geometry properties\cite{LB,SSZ,SY}. It seems that such metrics play a particular role in Finsler geometry. Secondly, the concise argument for Berwald's metrics sheds light on the general case. In fact, the projective flatness of Berwald's metrics had been studied in \cite{MSY}. But due to the inherent limitation of the method used in it, the argument is insufficient. However, it inspires the author to develop a new class of metric deformations called {\it $\b$-deformations}.

$\b$-deformations, determined by a Riemannian metric $\a$ and an $1$-form $\b$, are the key for our question. Some basic properties are given in Section \ref{bdf}. We can offer a brief description for the role of $\b$-deformations here. Consider some property of a given $\ab$-metric. Generally it is equivalent to some conditions on $\a$ and $\b$. But the geometry meaning of the original data $(\a,\b)$ is very obscure frequently. So the method of $\b$-deformations is aim to make clear the latent light. For an analogy, $\a$ and $\b$ just like two ropes tangles together, and it is possible to unhitch them using $\b$-deformations. The navigation expression for Randers metrics is a representative example. In fact, it is just a specific kind of $\b$-deformations. In other words, $\b$-deformations can be regarded as a natural generalization of navigation expression for Randers metrics.

Back to Berwald's metrics, we provide two kinds of $\b$-deformations for them, one is (\ref{Berwaldepsimple}) in Section \ref{sq} and the other is (\ref{Berwaldepsimple2}) in Section \ref{5}. The further research shows that these two expressions are exactly tailored for Berwald's metrics\cite{SY}, just as the navigation expression for Randers metrics. But it is unknown that whether there is a suitable physical or geometric explanation for them.

In view of the importance of the navigation expression for Randers metrics, we have reason to believe that $\b$-deformations will become a basic tool in the future study. We're really looking forward to see that more and more applications of this new class of metric deformations can be found in Finsler geometry, especially in Riemann geometry. Actually, something had been done in the author's doctoral dissertation, and the contents of this article are the representative part in it.


In Section \ref{5} we will discuss the non-trivial case and prove one of the main results, namely Theorem \ref{main}. Note that if $\a$ is local projectively flat and $\b$ is parallel with respect to $\a$, then $F=\pab$ is inevitably local projectively flat for any suitable function $\ps$ (see Section \ref{basic} for the reason). This is what we say the trivial case. The relevant functions and some some explicit examples are determined in Section \ref{6}. As a result, we obtain the following interesting metrics: take some suitable value of $\epsilon$ such that the functions
\begin{eqnarray}\label{exampleonball}
\p_\sigma=1+\epsilon s+\sum_{n=1}^{\infty}\left\{\prod_{k=1}^{n}\f{(k-\sigma-1)(2k-3)}{k(2k-1)}\right\}s^{2n}
\end{eqnarray}
are positive in the interval $(-1,1)$, then the Finsler metrics
$$F=\f{\sqrt{(1-\xx)\yy+\xy^2}}{(1-\xx)^{\sigma+1}}\p_\sigma\left(\f{\xy}{\sqrt{(1-\xx)\yy+\xy^2}}\right)$$
are projectively flat on $\mathbb B^n(1)$. In particular, it is just the Funk metric (\ref{funk}) when $\sigma=0$ and $\epsilon=1$, and the Berwald's metric (\ref{BerwaldF}) when $\sigma=1$ and $\epsilon=2$.

Due to the non-uniqueness of expressions, many metrics in Theorem \ref{main} are essentially of same {\it type} (see Definition \ref{type}). The related questions are discussed in Section \ref{non-uniqueness} and the final classification is given by Theorem \ref{projab}.

On the other hand, a basic fact proved by Rapcs\'{a}k says that if an $1$-form $\theta$ is closed, then two Finsler metrics $F$ and $F+\theta$ are {\it pointwise projectively related}, i.e., the have the same geodesics as point sets. Hence, if $F$ is a {\it reversible} locally Finsler metric, then we can obtain infinity many irreversible locally projectively flat Fisler metrics by plus closed $1$-forms (of course we should assume the resulting function can be as a Finsler metric). Theorem \ref{pr} will show that all the non-trivial irreversible locally projectively flat $\ab$-metrics can be obtained by this way. In this sense, there is not any {essentially} {\it irreversible} locally projectively flat $\ab$-metric. At the same time, we will show that there are one-parameter different types of reversible $\ab$-metrics can be locally projectively flat in the non-trivial sense, the simplest one in which is Riemannian metrics.

Finally, from Theorem \ref{main} we know that projectively flat Riemannian metrics and the closed $1$-forms which are also conformal are basic for our question. So in Section \ref{7} we will offer the complete list of them. As we have pointed out that the projectively flatness of $\ab$-metrics always arises form that of some Riemannian metric, we can else make an additional remark that it is such kinds of $1$-forms to preserve the projective flatness in fact.

\section{Preliminaries}\label{basic}

Let $M$ be a smooth $n$-dimensional manifold. A Finsler metric $F$ on $M$ is a continuous function
$F:TM\to[0,+\infty)$ with the following properties:
\begin{enumerate}
\item {\it Regularity}: $F$ is $C^\infty$ on the entire slit tangent bundle $TM\backslash\{0\}$;
\item {\it Positive homogeneity}: $F(x,\lambda y)=\lambda F(x,y)$ for all $\lambda>0$;
\item {\it Strong convexity}: the fundamental tensor $g_{ij}:=[\frac{1}{2}F^2]_{y^iy^j}$ is positive definite for all $(x,y)\in TM\backslash\{0\}$.
\end{enumerate}
Here $x=(x^i)$ denotes the coordinates of the point in $M$ and $y=(y^i)$ denotes the coordinates of the vector in $T_xM$. If $F$ satisfies absolute homogeneity additionally, namely $F(x,\lambda y)=|\lambda|F(x,y)$ for any real number $\lambda$, then $F$ is said to be {\it reversible}.

For a Finsler metric, the {\it geodesics} are characterized by the geodesic equation
$$\ddot c^i(t)+2G^i\left(c(t),\dot c(t)\right)=0,$$
where
$$G^i(x,y)=\f{1}{4}g^{il}\left\{[F^2]_{x^ky^l}y^k-[F^2]_{x^l}\right\}$$
are called the {\it spray coefficients} of $F$. Here $(g^{ij}):=(g_{ij})^{-1}$. For a Riemannian metric $\a$, its spray coefficients are given by
$$G^i(x,y)=\f{1}{2}\Gamma^i{}_{jk}(x)y^jy^k$$
in terms of the Christoffel symbols of $\a$.

A Finsler metric $F$ on a manifold $M$ is said to be {\it locally projectively flat} if at any point of $M$, there is a local coordinate system in which all geodesics of $F$ are straight lines. A basic fact is that $F$ is projectively flat on an open subset $U\subseteq\RR^n$ if and only if $F$ satisfies Rapcs\'ak's equations $$F_{x^ky^l}y^k-F_{x^l}=0,$$
or equivalently Hamel's equations
$$F_{x^ky^l}=F_{x^ly^k}.$$
In this case, the spray coefficients of $F$ are given by
\begin{eqnarray}\label{GPI}
G^i=Py^i,
\end{eqnarray}
where $P=\frac{F_{x^k}y^k}{2F}$\cite{Ha,Ra}. In fact, (\ref{GPI}) is also a characterization for projectively flat Finsler metrics. It will be used frequently in our discussion.

Suppose that $\phi(s)$ is a {\it positive} smooth function in some symmetric open interval $(-b_o,b_o)$. Then the function $F=\pab$ is a Finsler metric for any Riemannian metric $\a=\sqrt{a_{ij}(x)y^iy^j}$ and $1$-form $\b=b_i(x)y^i$ if and only if for all the reals $s$ and $b$ satisfying $|s|\leq b<b_o$, the following inequality holds\cite{CS2}:
\begin{eqnarray}\label{conditionofphi}
\p(s)-s\p'(s)+(b^2-s^2)\p''(s)>0.
\end{eqnarray}
For a given metric, $b$ means the norm of $\b$, which is defined by
$$b:=\sup\limits_{y\in T_xM}\f{\b(x,y)}{\a(x,y)}=\sqrt{a^{ij}(x)b_i(x)b_j(x)}.$$

This kind of Finsler metrics is the so-call {\it $\ab$-metrics}. We can get Randers metrics by taking $\ps=1+s$ and Berwald's metrics by taking $\ps=(1+s)^2$. But one must be careful that there are infinity many expressions for any given $\ab$-metric. For example, for all the functions $\ps=\sqrt{C_1+C_2s^2}+C_3s$, in which $C_1,C_2,C_2$ are arbitrary constants, the corresponding $\ab$-metrics $F=\sqrt{C_1\a^2+C_2\b^2}+C_3\b$ are all Randers metrics. This non-uniqueness will be useful in our discussions (see Section \ref{non-uniqueness} for details).

For a given $\ab$-metric $F=\pab$, let $\bij$ be the coefficients of the covariant derivative of
$\b$ with respect to $\a$. Denote $\rij$ and $\sij$ be symmetrization and antisymmetrization of $\bij$ respectively, i.e.,
$$\rij:=\f{1}{2}(\bij+b_{j|i}),\qquad\sij:=\f{1}{2}(\bij-b_{j|i}).$$
Since $\bij-b_{j|i}=\frac{\partial b_i}{\partial x^j}-\pp{b_j}{x^i}$, $\sij=0$ implies $\b$ is closed, vice versa. Moreover, we need the following abbreviations,
\begin{eqnarray*}
&r_{00}:=r_{ij}y^iy^j,~r_i:=r_{ij}y^j,~r_0:=r_iy^i,~r:=r_ib^i,\\
&s_{i0}:=s_{ij}y^j,~s^i{}_0:=a^{ij}s_{j0},~s_i:=s_{ij}y^j,~s_0:=s_ib^i.
\end{eqnarray*}
Roughly speaking, indices are raised and lowered by $\aij$, vanished by contracted with $b^i$ and changed to `${}_0$' by contracted with $y^i$.

According to \cite{CS2}, the spray coefficients $G^i$ of the $\ab$-metric $F=\pab$ are related to that of $\a$ by
\begin{eqnarray}\label{GF}
G^i= \G+\a Q s^i{}_0+\a^{-1}\Theta(-2\a Q s_0+r_{00})y^i+\Psi(-2\a Q s_0+r_{00})b^i,
\end{eqnarray}
where
\begin{eqnarray*}
Q=\frac{\p'}{\p-s\p'},~\Theta=\frac{(\p-s\p')\p'-s\p\p''}{2\p\big(\p-s\p'+(b^2-s^2)\p''\big)},~
\Psi=\frac{\p''}{2\big(\p-s\p'+(b^2-s^2)\p''\big)}.
\end{eqnarray*}

If $\a$ is locally projectively flat and $\b$ is parallel with respect to $\a$, then one can see by the above formula that $G^i=\G=Py^i$ for some suitable local coordinate system, which means that the $\ab$-metrics are always locally projectively flat for all the suitable functions $\ps$. Actually, such a Finsler metric must be either a Riemannian metric with constant sectional curvature by Beltrami's result when $\b=0$ or a local Minkowski metric when $\b\neq0$. For the later, $\a$ must be flat. Recall that a Finsler metric $F$ on an open subset in $U\subseteq\RR^n$ is said to be {\it Minkowskian} if the values of $F$ are independent of the points in $U$, just like the standard Euclidian metric in Riemannian metric. In a word, it can be regarded trivial when $\a$ is locally projectively flat and $\b$ is parallel with respect to $\a$.

A result of Z. Shen et al.\cite{SG} says that a Berwald's metric $F=\sq$ is locally projectively flat if and only if the spray coefficients of $\a$ are given in a adapted coordinate system by
\begin{eqnarray}\label{sqG}
\G=\xi y^i-2\tau\a^2b^i
\end{eqnarray}
for some $1$-form $\xi=\xi_i(x)y^i$ on the manifold and some scalar function $\tau=\tau(x)$, and at the same time the covariant derivative of $\b$ is given by
\begin{eqnarray}\label{sqbij}
\bij=2\tau\left\{(1+2b^2)\aij-3\bi\bj\right\}.
\end{eqnarray}

Later on, Z. Shen find a sufficient and necessary condition for $\ab$-metrics to be locally projectively flat in dimension $n\geq3$\cite{S}. It says that for a projectively flat $\ab$-metric $F=\pab$ on an open subset $U\in\RR^n$ with $n\geq3$, if we add three conditions:
\vspace{-2bp}
\begin{enumerate}[(a)]
\item {\it $F$ is not of Randers type;}
\item {\it $\b$ is not parallel with respect to $\a$;}
\item {\it $\ud b\equiv0$ or $\ud b\neq0$ everywhere;}
\end{enumerate}
then the corresponding function $\ps$ must be a solution of the following 2nd  ordinary differential equation,
\begin{eqnarray}\label{ode}
\left\{1+(k_1+k_3)s^2+k_2s^4\right\}\p''(s)=(k_1+k_2s^2)\{\ps-s\p'(s)\},
\end{eqnarray}
where $k_1,k_2,k_3$ are constants with $k_2\neq k_1k_3$, at the same time $\a$ and $\b$ satisfy
\begin{eqnarray}
\G&=&\xi y^i-\tau(k_1\a^2+k_2\b^2)b^i,\label{Gi}\\
\bij&=&2\tau\left\{(1+k_1b^2)a_{ij}+(k_3+k_2b^2)b_ib_j\right\}\label{bij}
\end{eqnarray}
for some $1$-form $\xi$ on $U$ and some scalar function $\tau$.

Notice that when $k_2=k_1k_3$, solve (\ref{ode}) we have $\ps=C_1\sqrt{1+k_1s^2}+C_2s$, in this case the corresponding $\ab$-metrics are Randers metrics essentially. So the condition $k_2\neq k_1k_3$ ensure (a) holds. The condition (b) is also natural, because if $\b$ is parallel with respect to $\a$, then by (\ref{GF}) $F=\pab$ is projectively flat if and only if $\a$ is projectively flat. This is the trivial case as we have pointed out earlier. The condition (c) is a little hard to been understood. It is completely technical. Actually, as Z. Shen told to the author, in order to get the sufficient and necessary condition, we only need to assume that $b$ is a constant or can take sufficient numbers of values.

Recently, \cite{YGJ} shows that, except for two {\it type} of $\ab$-metrics $F=\a\pm\frac{\b^2}{\a}$ (see Definition \ref{type} for the meaning of type), Z. Shen's sufficient and necessary condition is hold in dimension $n=2$. Note that for these exceptional metrics, the corresponding functions $\ps=1\pm s^2$ are still solves of (\ref{ode}).


Anyway, from (\ref{Gi}) and (\ref{bij}) one can see that the geometry properties of $\a$ and $\b$ are very elusive. $\a$ includes too much information about $\b$, so does $\b$. Hence, we need to separate them to make them clear. The following Lemma is needed for fulfilling the task later.
\begin{lemma}\label{condition2}
Let~$F=\pab$~be a Finlser metric, where
$\ps$ satisfies (\ref{ode}). Then the following inequalities hold:
$$1+k_1s^2>0,\quad 1+(k_1+k_3)s^2+k_2s^4>0,\quad\forall|s|\leq b<b_o.$$
\end{lemma}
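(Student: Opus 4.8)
The plan is to play the two available facts about $\ps$ against each other: the convexity condition (\ref{conditionofphi}), which holds because $F=\pab$ is a Finsler metric, and the differential equation (\ref{ode}). Throughout I would abbreviate $\mu(s):=\ps-s\p'(s)$, together with $A(s):=1+(k_1+k_3)s^2+k_2s^4$ and $B(s):=k_1+k_2s^2$, so that (\ref{ode}) becomes the compact relation $A(s)\p''(s)=B(s)\mu(s)$ and (\ref{conditionofphi}) reads $\mu(s)+(b^2-s^2)\p''(s)>0$ for all $|s|\le b<b_o$. The first observation is the ``boundary'' case of the convexity inequality: choosing $b=|s|$ annihilates the second term and leaves $\mu(s)>0$ for every $|s|<b_o$. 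This positivity of $\mu$ will be used repeatedly.

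The key step is an algebraic identity obtained by inserting the ODE into the convexity expression. Multiplying $\mu(s)+(b^2-s^2)\p''(s)$ by $A(s)$ and replacing $A(s)\p''(s)$ by $B(s)\mu(s)$ gives
\begin{equation*}
A(s)\big[\mu(s)+(b^2-s^2)\p''(s)\big]=\mu(s)\big[A(s)+(b^2-s^2)B(s)\big].
\end{equation*}
A direct expansion shows that the $s^4$-terms in $A(s)+(b^2-s^2)B(s)$ cancel, leaving the clean factor
\begin{equation*}
g(s,b):=A(s)+(b^2-s^2)B(s)=1+k_1b^2+(k_3+k_2b^2)s^2.
\end{equation*}
Since $\mu(s)>0$ and the bracket on the left is exactly the positive quantity (\ref{conditionofphi}), this identity forces $A(s)$ and $g(s,b)$ to have the same sign for every admissible pair $|s|\le b<b_o$.

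From this the two inequalities drop out. For the first, evaluate the identity at $s=0$: since $A(0)=1$ and $\mu(0)=\p(0)>0$, positivity of the left-hand side gives $g(0,b)=1+k_1b^2>0$ for all $0\le b<b_o$, that is, $1+k_1s^2>0$ on $(-b_o,b_o)$. For the second I would show $A$ cannot vanish. We have $A(0)=1>0$; if $A(s_1)=0$ for some $0<|s_1|<b_o$, the identity forces $\mu(s_1)g(s_1,b)=0$, and since $\mu(s_1)>0$ this yields $g(s_1,b)=0$ for every $b\in[|s_1|,b_o)$. As $g(s_1,\cdot)$ is affine in $b^2$ on a nondegenerate interval, both coefficients must vanish, giving $k_1+k_2s_1^2=0$ and $1+k_3s_1^2=0$, hence $k_2=k_1k_3$. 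This contradicts the standing hypothesis $k_2\neq k_1k_3$ attached to (\ref{ode}). Therefore $A$ is continuous, nowhere zero, and positive at the origin, so $1+(k_1+k_3)s^2+k_2s^4=A(s)>0$ throughout $(-b_o,b_o)$.

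The main obstacle here is one of discovery rather than technical difficulty. Everything rests on noticing that feeding (\ref{ode}) into (\ref{conditionofphi}) produces the factorization above with the quartic term cancelling, and on recognizing that the nondegeneracy condition $k_2\neq k_1k_3$ is precisely what rules out a zero of $A(s)$; once the identity and this observation are in place, the remainder is routine sign-chasing.
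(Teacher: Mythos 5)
Your proof is correct and follows essentially the same route as the paper: both insert (\ref{ode}) into (\ref{conditionofphi}), observe that the quartic terms cancel to produce the factor $1+k_1b^2+(k_3+k_2b^2)s^2$, establish $\ps-s\p'(s)>0$ by taking $b=|s|$ in (\ref{conditionofphi}), and then read off the two inequalities by comparing signs. The only difference is that where the paper simply asserts that the denominator $1+(k_1+k_3)s^2+k_2s^4$ ``must be positive,'' you rigorously exclude a zero of this quartic via continuity, the affine-in-$b^2$ structure of $1+k_1b^2+(k_3+k_2b^2)s^2$, and the standing hypothesis $k_2\neq k_1k_3$ --- a step the paper leaves implicit, so your write-up is in fact slightly more complete.
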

\begin{proof}
By (\ref{ode}),
\begin{eqnarray*}
&&\ps-s\p'(s)+(b^2-s^2)\p''(s)\\
&=&\{\ps-s\p'(s)\}\left\{1+\f{(b^2-s^2)(k_1+k_2s^2)}{1+(k_1+k_3)s^2+k_2s^4}\right\}\\
&=&\{\ps-s\p'(s)\}\cdot\f{1+k_1b^2+s^2(k_3+k_2b^2)}{1+(k_1+k_3)s^2+k_2s^4}.
\end{eqnarray*}
By taking $b=s$ in (\ref{conditionofphi}), we can see that $\ps-s\p'(s)$ is always positive as long
\newpage
\noindent
as $F$ is a Finsler metric. So the condition (\ref{conditionofphi}) implies the following inequality
\begin{eqnarray}
\f{1+k_1b^2+s^2(k_3+k_2b^2)}{1+(k_1+k_3)s^2+k_2s^4}>0,\quad\forall|s|\leq b<b_o.\label{something}
\end{eqnarray}
Take $s=0$ we have $1+k_1b^2>0$, then the first inequality in the Lemma holds because $1+k_1s^2\geq\min\{1,1+k_1b^2\}$. The second inequality in the Lemma is also true because in order to keep the inequality (\ref{something}) hold, the denominator must be positive.
\end{proof}

\section{Berwald's metrics}\label{sq}

We will begin our discussion with a special case, namely how to describe the projective flatness for Berwald's metrics. It is very simple but illuminating. Recall that such a metric is projectively flat if and only if $\a$ and $\b$ satisfy (\ref{sqG}) and (\ref{sqbij}), but their geometric meaning are not so clear. Inspired by the argument in \cite{MSY}, we will take some deformations to make them simpler.

Consider $\a$ firstly. If we can take some deformations for it such that the spray coefficients of the resulting Riemannian metric $\ha$ doesn't include the term $b^i$, i.e., $\hG=\hat\xi y^i$ for some function $\hat\xi=\hat\xi(x)$, then $\ha$ is projectively flat. It is the best situation we can expect, and this aim can be achieved easily just by doing conformal deformation.

More specifically, let $\ha=e^{\rho(b^2)}\a$. Some elementary computations show
\begin{eqnarray*}
\hat\Gamma^i{}_{jk}
=\Gamma^i{}_{jk}+2\rho'\left\{\delta^i{}_j(r_k+s_k)+\delta^i{}_k(r_j+s_j)-a_{jk}(r^i+s^i)\right\},
\end{eqnarray*}
here we use the fact that
$$\pp{b^2}{x^i}=2b^jb_{j|i}=2(r_i+s_i).$$
As a result, when $\a$ and $\b$ satisfy (\ref{sqG}) and (\ref{sqbij}), then
\begin{eqnarray}
\hG&=&\G+\rho'\left\{2(r_0+s_0)y^i-\a^2(r^i+s^i)\right\}\nonumber\\
&=&\left\{\theta+4\tau\rho'(1-b^2)\beta\right\}y^i-2\tau\left\{1+\rho'(1-b^2)\right\}\a^2b^i\label{temphG}
\end{eqnarray}
From the last equality one can see that $\ha$ is projectively flat if and only if
$$1+\rho'(1-b^2)=0,$$
so the conformal factor can be chosen as $\rho(b^2)=\ln(1-b^2)$. Notice that $b<1$ when $F=\sq$ is a Finsler metric.

Our first task is done. Consider $\b$ next. For convenience, let $\hat\b=\b$ and denote $\hbij$ be the covariant derivative of $\hb$ with respect to $\ha$. Then
\begin{eqnarray*}
\hbij&=&\pp{b_i}{x^j}-b_k\Gamma^k{}_{ij}+b_k(\Gamma^k{}_{ij}-\hat\Gamma^k{}_{ij})\\
&=&\bij-2\rho'\left\{\bi(r_j+s_j)+\bj(r_i+s_i)-\aij r\right\}\\
&=&2\tau(\aij+\bi\bj).
\end{eqnarray*}

In spite of the property of $\ha$ is good enough, $\hb$ is still not so good and clear. But we can take some deformations for it. Let $\bb=\nu(b^2)\hb$, and for convenience,
\newpage
\noindent let $\ba=\ha$ and denote $\bbij$ be the covariant derivative of $\bb$ with respect to $\ba$. Then $\ba$ is projectively flat and a direct computation shows that
\begin{eqnarray*}
\bbij&=&\nu\hbij+2\nu'\hbi(r_j+s_j)\\
&=&2\tau\left\{\nu\aij+\nu\bi\bj+2\nu'(1-b^2)\bi\bj\right\}.
\end{eqnarray*}
Form the above equality one can see that if the deformation factor $\nu$ satisfies
\begin{eqnarray}\label{sqnu}
\nu+2\nu'(1-b^2)=0,
\end{eqnarray}
then $\hbij$ will don't include the term $\bi\bj$ and given in the form $\bbij=c(x)\baij$.

We believe it is the best situation for the $1$-form. The main reason is that such kind of $1$-forms has great linear structure. More specifically, the linear combination of two different $1$-forms satisfying $\rij=c(x)\aij$ will keep the same property. In fact, the dual tensors of such kind of $1$-forms are just the conformal vector field with respect to the corresponding Riemannian metric. In the rest of the paper, such $1$-forms will be called {\it conformal $1$-forms}.

Back to the argument. By (\ref{sqnu}) the deformation factor can be chosen as $\nu(b^2)=\sqrt{1-b^2}$. So far we have prove that if a Finsler metric $F=\sq$ is locally projectively flat, then $\ba$ is locally projectively flat, $\bb$ is closed and conformal with respect to $\ba$.

\[\begin{CD}
\a @>\ha=(1-b^2)\a>> \ha @>\,\,\ \ \ba=\ha\ \ \ \,\,>> \ba~(\textrm{projectively flat})\ \ \ \ \ \ \ \\
\b @>\ \ \ \hb=\b\ \ \ \, >> \hb @>\bb=\sqrt{1-b^2}\hb>> \bb~(\textrm{closed and conformal})
\end{CD}\]
\vspace{4bp}

Conversely, the norm $\bar b:=\|\bb\|_{\ba}$ is related to $b$ as
$$\bar b^2=\bar a^{ij}\bbi\bbj=(1-b^2)^{-2}a^{ij}\sqrt{1-b^2}\bi\sqrt{1-b^2}\bj=\f{b^2}{1-b^2},$$
or equivalently, $(1-b^2)(1+\bar b^2)=1$. Hence, the above deformations are reversible. So one can easily obtain the required data satisfying (\ref{sqG}) and (\ref{sqbij}) by the reverse deformations.

\[\begin{CD}
\a @<\a=(1+\bar b^2)\ha<< \ha @<\,\,\ \ \ha=\ba\ \ \ \,\,<< \ba~(\textrm{projectively flat})\ \ \ \ \ \ \ \\
\b @<\ \ \ \b=\hb\ \ \ \, << \hb @<\hb=\sqrt{1+\bar b^2}\bb<< \bb~(\textrm{closed and conformal})
\end{CD}\]
\vspace{4bp}

Summarizing the above discussions, we have
\begin{theorem}\label{ab2}
The Finsler metric $F=\frac{(\a+\b)^2}{\a}$ is locally projectively flat if and only if $\a$ and $\b$ can be expressed as the following form,
$$\a=(1+\bar b^2)\ba,\qquad\b=\sqrt{1+\bar b^2}\bb,$$
where $\ba$ is a locally projectively flat Riemannian metric, $\bb$ is a closed 1-form and conformal with respect to $\ba$, $\bar b:=\|\bb\|_{\ba}$. In this case, $F$ can be rewrote as
\begin{eqnarray}\label{Berwaldepsimple}
F=\f{\left(\sqrt{1+\bar b^2}\ba+\bb\right)^2}{\ba}.
\end{eqnarray}
\end{theorem}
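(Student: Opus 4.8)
The plan is to establish Theorem \ref{ab2} as a clean reformulation of the already-established characterization \eqref{sqG}--\eqref{sqbij}, by passing through the chain of deformations displayed in the excerpt and then verifying that this chain is reversible. The entire computational machinery — the conformal change formula for the Christoffel symbols, the resulting expression \eqref{temphG} for $\hG$, and the two deformation formulas for the covariant derivatives $\hbij$ and $\bbij$ — is already worked out in the preceding discussion, so I would not redo it. Instead, I would organize the proof into the two implications and lean on those formulas.

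For the forward direction, I would assume $F=\sq$ is locally projectively flat, invoke the result of Shen et al.\ \cite{SG} to get \eqref{sqG} and \eqref{sqbij}, and then simply read off the consequences of the two deformations. First, setting $\ha=e^{\rho(b^2)}\a$ with $\rho(b^2)=\ln(1-b^2)$, the displayed computation \eqref{temphG} shows $1+\rho'(1-b^2)=0$, hence $\hG=\hat\xi y^i$ and $\ha$ is projectively flat; the companion computation gives $\hbij=2\tau(\aij+\bi\bj)$. Second, setting $\bb=\nu(b^2)\hb$ with $\nu(b^2)=\sqrt{1-b^2}$ (the solution of \eqref{sqnu}), the displayed formula yields $\bbij=c(x)\baij$, so $\bb$ is closed (its antisymmetrization vanishes) and conformal with respect to $\ba=\ha$, while $\ba$ inherits projective flatness from $\ha$. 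This is precisely the content of the first commutative diagram.

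For the converse, the key point is reversibility of the deformations, and here the one computation I actually need to present is the relation between the norms $\bar b=\|\bb\|_{\ba}$ and $b=\|\b\|_\a$. Using $\ba=(1-b^2)\a$ (so $\bar a^{ij}=(1-b^2)^{-2}a^{ij}$) and $\bb=\sqrt{1-b^2}\,\b$, a short contraction gives $\bar b^2 = b^2/(1-b^2)$, equivalently $(1-b^2)(1+\bar b^2)=1$. This identity lets me invert the deformation factors: $\a=(1+\bar b^2)\ba$ and $\b=\hb=\sqrt{1+\bar b^2}\,\bb$, which is exactly the expression asserted in the statement and summarized in the second diagram. Starting from a projectively flat $\ba$ and a closed conformal $\bb$, running the deformations backward reconstructs data satisfying \eqref{sqG} and \eqref{sqbij}, so $F$ is locally projectively flat. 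Substituting $\a=(1+\bar b^2)\ba$ and $\b=\sqrt{1+\bar b^2}\,\bb$ into $F=\sq$ and factoring out $(1+\bar b^2)$ from numerator and denominator produces the rewritten form \eqref{Berwaldepsimple}.

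The main obstacle, such as it is, is not any single hard estimate but rather the bookkeeping of reversibility: I must confirm that the two scalar ODEs \eqref{sqnu} and $1+\rho'(1-b^2)=0$ have globally defined, invertible solutions on the relevant range $b<1$ (which they do, since $1-b^2>0$ there), and that the norm identity $(1-b^2)(1+\bar b^2)=1$ genuinely inverts the change of variable $b\mapsto\bar b$ monotonically so that the backward chain is well-defined. Care is also needed to check that closedness is preserved under the conformal deformation of $\bb$ — i.e.\ that the deformation, which rescales $\b$ by a function of $b^2$, does not destroy $d\b=0$; this follows because the correction terms in $\bbij-\bbji$ involve only the symmetric quantities $r_j+s_j$ appearing symmetrically, so the antisymmetric part is unchanged up to the scalar factor. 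Once these consistency checks are in place, the theorem is just the statement that the diagram and its reverse compose to the identity.
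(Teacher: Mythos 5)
Your proposal is correct and follows essentially the same route as the paper: invoke the characterization (\ref{sqG})--(\ref{sqbij}) of Shen et al., apply the conformal deformation with $\rho(b^2)=\ln(1-b^2)$ and the rescaling $\nu(b^2)=\sqrt{1-b^2}$ to obtain a projectively flat $\ba$ and a closed conformal $\bb$, then invert everything through the norm identity $(1-b^2)(1+\bar b^2)=1$. Your closing worry about closedness surviving the rescaling is redundant (and the justification you sketch is not quite right in general): closedness already follows from the symmetry of $\bbij=c(x)\baij$, which is exactly how the paper argues it.
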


\section{A special class of deformations}\label{bdf}

One can see from the former section that deformations play an important role in our question. In this section, we will introduce a triple of metric deformations and provide the necessary formulas.

It should be pointed out firstly that the deformations used here are essentially different from those in \cite{MSY}. The most notable is, the deformations factors here are always the function of $b$, not only of points. In Section \ref{sq} we have fully show their effectiveness. Such deformations make the whole argument concise.

Even so, only conformal deformations are not enough to make the Riemannian metrics projectively flat for the general case. The reason is, if $\a$ and $\beta$ satisfy (\ref{Gi}) and (\ref{bij}), then by the conformal deformation $\ha=e^{\rho(b^2)}\a$ we can only dispel the term $\a^2b^i$ (see the formula (\ref{temphG})), just as what we have done in Section \ref{sq}, but it is impossible to dispel the term $\b^2b^i$ when $k_2\neq0$.

Fortunately, there is still one kind (it is also the last kind in fact) of deformations could be chosen if we insist using the functions of $b$ as deformation factors. It is given by $\a^2-\kappa(b^2)\b^2$ with the condition $1-\kappa b^2>0$ to keep it positive definite.

So far, we have three kinds of deformations in terms of $\a$ and $\b$. Let's list them in turn as following:
\begin{eqnarray*}
&\ta=\sqrt{\a^2-\kappa(b^2)\b^2},\qquad\tb=\b;\\
&\ha=e^{\rho(b^2)}\ta,\qquad\hb=\tb;\\
&\ba=\ha,\qquad\bb=\nu(b^2)\hb.
\end{eqnarray*}
Here we choose $b^2$ instead of $b$ as the variable, because it will be convenient for computations.

According to their characteristics, we call them {\it $\b$-deformations}. Obviously, the first two kinds of $\b$-deformations are for Riemannian metrics and the last one is for $1$-forms. More specifically, the first kind of $\b$-deformation can be regarded as stretch change for $\a$ along the direction determined by $\b$, the second one is conformal change and the third one is length change for $\b$.

Some basic formulas are listed below and the elementary proofs are left out here. It should be attention that the notation `$\dot b_{i|j}$' always means the covariant derivative of the $1$-form `$\dot\b$' with respect to the {\it corresponding} Riemannian metric `$\dot\a$', where the symbol `~$\dot{}$~' can be `~$\tilde{}$~', `~$\hat{}$~' or `~$\bar{}$~' in this paper.
\begin{lemma}\label{beta1}
Let $\ta=\sqrt{\a^2-\kappa(b^2)\b^2}$, $\tb=\b$. Then
\begin{eqnarray*}
\tG&=&\G-\frac{\kappa}{2(1-\kappa b^2)}\big\{2(1-\kappa b^2)\b s^i{}_0+r_{00}b^i+2\kappa s_0\b b^i\big\}\\
&&+\frac{\kappa'}{2(1-\kappa b^2)}\big\{(1-\kappa b^2)\b^2(r^i+s^i)+\kappa r\b^2b^i-2(r_0+s_0)\b
b^i\big\},\\
\tbij&=&\bij+\frac{\kappa}{1-\kappa b^2}\big\{b^2\rij+\bi\sj+\bj\si\big\}\\
&&-\frac{\kappa'}{1-\kappa b^2}\big\{r\bi\bj-b^2\bi(\rj+\sj)-b^2\bj(\ri+\si)\big\}.
\end{eqnarray*}
\end{lemma}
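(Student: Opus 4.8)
The statement is a pure connection computation, so the plan is to reduce it to the classical formula for the difference of two Levi-Civita connections and then contract. First I would read off the fundamental tensor of $\ta$ from $\ta^2=\a^2-\kappa\b^2$, namely $\taij=\aij-\kappa\,b_ib_j$, a rank-one perturbation of $\aij$. Its inverse follows from the Sherman--Morrison identity (or is checked in one line),
\[
\tilde a^{ij}=a^{ij}+\f{\kappa}{1-\kappa b^2}b^ib^j,
\]
which is the source of the recurring factor $1-\kappa b^2$. Two contractions will be used repeatedly: raising an index with $\tilde a^{il}$ splits as $a^{il}$ plus a $b^ib^l$-correction, and $b_k\tilde a^{kl}=\f{b^l}{1-\kappa b^2}$.

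Next I would invoke the standard identity
\[
\tGijk-\Gijk=\f{1}{2}\tilde a^{il}\big(\nabla_j\tilde a_{kl}+\nabla_k\tilde a_{jl}-\nabla_l\tajk\big),
\]
where $\nabla$ denotes covariant differentiation with respect to $\a$, for which $\nabla_k\aij=0$. Differentiating $\taij=\aij-\kappa b_ib_j$ and using $b_{i|k}=\rij+\sij$ (with $k$ in place of $j$) together with $\nabla_k\kappa=\kappa'\nabla_k(b^2)=2\kappa'(r_k+s_k)$, the latter from the identity $\pp{b^2}{x^k}=2(r_k+s_k)$ recalled in Section \ref{sq}, one obtains
\[
\nabla_k\taij=-2\kappa'(r_k+s_k)b_ib_j-\kappa\big[(\rij... )\big],
\]
more precisely $\nabla_k\taij=-2\kappa'(r_k+s_k)b_ib_j-\kappa\big[(r_{ik}+s_{ik})b_j+b_i(r_{jk}+s_{jk})\big]$. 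The spray coefficients then follow from $\tG=\G+\f{1}{2}(\tGijk-\Gijk)y^jy^k$: contracting with $y^jy^k$ symmetrizes in $j,k$, so the purely antisymmetric contraction $s_{jk}y^jy^k$ drops while the mixed pieces survive and reassemble, through the abbreviations $r_{00},r_0,s_0,s^i{}_0,\b$, into the two groups carrying the common prefactors $\f{\kappa}{2(1-\kappa b^2)}$ and $\f{\kappa'}{2(1-\kappa b^2)}$.

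For $\tbij$ I would use that $\tb=\b$ has the same components $\tilde b_i=b_i$, so subtracting the two covariant-derivative formulas gives $\tbij=\bij-b_k(\tGkij-\Gkij)$. Feeding in the difference tensor and the contraction $b_k\tilde a^{kl}=\f{b^l}{1-\kappa b^2}$ localizes the correction, leaving one to evaluate $b^l\big(\nabla_i\tilde a_{jl}+\nabla_j\tilde a_{il}-\nabla_l\tilde a_{ij}\big)$; the $\kappa'$-contributions collect into the $r,r_i+s_i,r_j+s_j$ terms and the $\kappa$-contributions into the $b^2\rij$ and $b_is_j+b_js_i$ terms, after dividing by $2(1-\kappa b^2)$.

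The main obstacle is not conceptual but the bookkeeping, and in particular a sign subtlety coming from the antisymmetry of $\sij$: when $b^l$ is contracted into different slots one has $b^ls_{li}=s_i$ but $b^ls_{il}=-s_i$. It is exactly this asymmetry that makes the $s_{jk}y^jy^k$ pieces cancel in $\tG$ yet produces the surviving $b_is_j+b_js_i$ term in $\tbij$; overlooking it would wrongly kill that term. I would therefore carry out the $b^l\nabla_l\tilde a_{ij}$ contraction with special care, and cross-check the final collection by confirming that the symmetric part of $\tbij$ reproduces the expected $\trij$ while the antisymmetric part reproduces $\tsij$.
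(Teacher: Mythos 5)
Your proposal is correct: the paper omits this proof entirely (``the elementary proofs are left out here''), and your computation --- the rank-one inverse $\tilde a^{ij}=a^{ij}+\frac{\kappa}{1-\kappa b^2}b^ib^j$, the difference-of-Levi-Civita-connections formula with $\nabla_k\tilde a_{ij}=-2\kappa'(r_k+s_k)b_ib_j-\kappa\big[(r_{ik}+s_{ik})b_j+b_i(r_{jk}+s_{jk})\big]$, and the contractions $\tG=\G+\frac{1}{2}(\tGijk-\Gijk)y^jy^k$ and $\tbij=\bij-b_k(\tGkij-\Gkij)$ --- is exactly the standard verification the author intends, and carrying it out does reproduce both stated formulas term by term. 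Your flagged sign subtlety ($b^ls_{li}=s_i$ versus $b^ls_{il}=-s_i$ under the paper's convention $s_i=b^js_{ji}$) is indeed the one point where care is needed, and it is precisely what produces the surviving $b_is_j+b_js_i$ term in $\tbij$.
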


\begin{lemma}\label{beta2}
Let $\ha=e^{\rho(b^2)}\ta$, $\hb=\tb$. Then
\begin{eqnarray*}
\hG&=&\tG+\rho'\left\{2(r_0+s_0)y^i-(\a^2-\kappa \b^2)\left(r^i+s^i+\frac{\kappa}{1-\kappa b^2}rb^i\right)\right\},\\
\hbij&=&\tbij-2\rho'\left\{\bi(\rj+\sj)+\bj(\ri+\si)-\frac{1}{1-\kappa b^2}r(\aij-\kappa\bi\bj)\right\}.
\end{eqnarray*}
\end{lemma}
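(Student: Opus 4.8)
The metric $\ha=e^{\rho(b^2)}\ta$ is an ordinary conformal change of the Riemannian metric $\ta$, with conformal exponent the scalar function $\lambda(x):=\rho(b^2(x))$ on the base manifold. The plan is therefore to invoke the classical transformation law for the Levi-Civita connection under a conformal change,
$$\hGijk = \tGijk + \dij\lambda_k + \dik\lambda_j - \tajk\,\tilde a^{il}\lambda_l,$$
where $\lambda_k:=\pp{\lambda}{x^k}$ and $\tilde a^{il}$ denotes the inverse of the matrix $\taij=\aij-\kappa\bi\bj$ associated to $\ta$. Contracting with $\frac{1}{2}y^jy^k$ turns this into the spray relation $\hG = \tG + (\lambda_ky^k)y^i - \frac{1}{2}\ta^2\,\tilde a^{il}\lambda_l$, so everything reduces to evaluating the two quantities $\lambda_k$ and $\tilde a^{il}\lambda_l$.

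For the first, since $b^2=a^{ij}\bi\bj$ is computed with the original metric $\a$, the identity $\pp{b^2}{x^k}=2(r_k+s_k)$ already used in Section \ref{sq} gives $\lambda_k=2\rho'(r_k+s_k)$; hence $\lambda_ky^k=2\rho'(r_0+s_0)$. For the second I would compute the inverse of $\ta$ by the Sherman--Morrison formula, $\tilde a^{il}=a^{il}+\frac{\kappa}{1-\kappa b^2}b^ib^l$, and then use the contraction identities $b^l r_l=r$ and $b^l s_l=0$ (the latter by antisymmetry of $s_{ij}$) to obtain $\tilde a^{il}\lambda_l=2\rho'\{(r^i+s^i)+\frac{\kappa}{1-\kappa b^2}rb^i\}$. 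Substituting these into the spray relation and using $\ta^2=\a^2-\kappa\b^2$ yields the stated formula for $\hG$.

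For the covariant-derivative formula, since $\hb=\tb=\b$ have identical components, the two covariant derivatives differ only through the connection: $\hbij-\tbij=-(\hGkij-\tGkij)\bk$. Plugging in the conformal transformation law (with the roles of the free and summed indices adjusted) and the value $\tilde a^{kl}\lambda_l\,b_k=\frac{2\rho' r}{1-\kappa b^2}$, together with $\taij=\aij-\kappa\bi\bj$, collapses the expression to the claimed $\hbij$.

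The computations are entirely routine; the only point requiring care is that indices must be raised with $\ta$ rather than $\a$, so the Sherman--Morrison inversion of $\taij$ is the one genuinely substantive step, and one must keep the original-metric data $r_k$, $s_k$, $r$ (built from the $\a$-covariant derivative of $\b$) cleanly separated from the $\ta$-raising operation.
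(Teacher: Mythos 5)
Your proposal is correct: the conformal transformation law, the identity $\pp{b^2}{x^k}=2(r_k+s_k)$, the Sherman--Morrison inversion $\tilde a^{il}=a^{il}+\frac{\kappa}{1-\kappa b^2}b^ib^l$, and the contractions $b^lr_l=r$, $b^ls_l=0$ reproduce both stated formulas exactly, and this is precisely the ``elementary computation'' the paper omits (it carries out the identical argument explicitly in Section \ref{sq} for the special case $\kappa=0$). No gaps; your emphasis on raising indices with $\ta$ rather than $\a$ is indeed the only substantive point.
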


\begin{lemma}\label{beta3}
Let $\ba=\ha$, $\bb=\nu(b^2)\hb$. Then
\begin{eqnarray*}
\bG&=&\hG,\\
\bbij&=&\nu\hbij+2\nu'\bi(\rj+\sj).
\end{eqnarray*}
\end{lemma}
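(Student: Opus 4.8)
The plan is to use the fact that this last $\b$-deformation changes only the $1$-form, leaving the Riemannian metric fixed ($\ba=\ha$), so both assertions reduce to elementary properties of the covariant derivative rather than to the lengthy connection computations needed for Lemmas~\ref{beta1} and~\ref{beta2}.

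The first identity $\bG=\hG$ needs no computation at all: as recalled in Section~\ref{basic}, the spray coefficients of a Riemannian metric are $\frac12\Gamma^i{}_{jk}y^jy^k$, determined entirely by its Christoffel symbols; since $\ba$ and $\ha$ are the very same metric, they share the same connection and hence the same spray.

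For the second identity I would compute $\bbij$ straight from the definition, recalling the convention that $\bbij$ denotes the covariant derivative of $\bb$ with respect to $\ba=\ha$. Writing $\bb_i=\nu(b^2)\hb_i$ and noting that $\hb=\tb=\b$ gives $\hb_i=b_i$, the factor $\nu(b^2)$ is a scalar function on the manifold, so the derivation (Leibniz) rule yields
$$\bbij=\nu\,\hbij+\pp{\nu}{x^j}\,b_i,$$
where the connection term $-\nu b_k\bGkij=-\nu b_k\hGkij$ has been reabsorbed into $\nu\hbij$ because $\bGkij=\hGkij$. It then remains only to evaluate the scalar derivative by the chain rule, $\pp{\nu}{x^j}=\nu'(b^2)\pp{b^2}{x^j}$, and to insert the identity $\pp{b^2}{x^j}=2b^kb_{k|j}=2(\rj+\sj)$ already used in Section~\ref{sq}. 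Substituting gives exactly $\bbij=\nu\hbij+2\nu'\bi(\rj+\sj)$.

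I expect no real obstacle here; the single point demanding attention is the bookkeeping of notation, namely that $\bbij$ is taken with respect to $\ba=\ha$, whereas the quantities $\rj,\sj$ produced by $\pp{b^2}{x^j}$ are the $\a$-covariant ones---consistent with $b^2$ denoting throughout the norm of $\b$ measured in the original metric $\a$. Once this convention is fixed, the claimed formula is immediate.
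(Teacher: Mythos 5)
Your proof is correct. The paper in fact omits this argument entirely (``the elementary proofs are left out here''), and your computation --- same spray since $\ba=\ha$, then the Leibniz rule plus $\pp{b^2}{x^j}=2b^kb_{k|j}=2(\rj+\sj)$ --- is precisely the elementary verification the paper has in mind, including the correct observation that $\rj,\sj$ remain the $\a$-covariant quantities.
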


\section{General case}\label{5}
Suppose that $F=\pab$ is a non-trivial projectively flat $\ab$-metric on $U\subseteq\RR^n$ with $n\geq3$, then $\a$ and $\b$ satisfy (\ref{Gi}) and (\ref{bij}) according to Z. Shen's result. As what we have point out previously, it is impossible to make $\a$ projectively flat directly only using conformal deformations when $k_2\neq0$. Even if it could be done when $k_2=0$, we aim to search an universal deformation way.

There is a little difficulty to decide the first step of $\b$-deformations. But combining with Lemma \ref{beta2} one can see that, in order to make $\ta$ turning to a projectively flat metric $\ha$, which means that the spray coefficients of $\ha$ are in the form $\hG=\hat P y^i$, the spray coefficients of $\ta$ must possess a character that the coefficient of $b^i$ is scalar for $\ta^2$. This observation leads to the following
\begin{lemma}\label{step1}
Assume that $\a$ and $\b$ satisfy (\ref{Gi}) and (\ref{bij}). Take the first step of $\b$-deformations. Then the spray coefficients of $\ta$ are in the form
\begin{eqnarray}\label{tGi}
\tG=\xi y^i-\tilde\tau\ta^2b^i
\end{eqnarray}
if and only if the deformation factor $\kappa(b^2)$ satisfies
\begin{eqnarray}
\left\{1+(k_1+k_3)b^2+k_2b^4\right\}\kappa'+\kappa^2+(k_1+k_3)\kappa+k_2=0.\label{kappa}
\end{eqnarray}
Specially, $\kappa(b^2)=-(k_1+k_3+k_2b^2)$ is a solution of (\ref{kappa}) and in this case,
\begin{eqnarray}
\tG&=&\xi y^i+\f{\tau(k_3+k_2b^2)}{1+(k_1+k_3)b^2+k_2b^4}\ta^2b^i,\label{tGi'}\\
\tbij&=&2\tau\left\{\f{1+k_1 b^2}{1+(k_1+k_3)b^2+k_2b^4}\taij-(k_1+k_2b^2)\bi\bj\right\}.\label{tbij'}
\end{eqnarray}
\end{lemma}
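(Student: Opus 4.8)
The plan is to substitute the structural assumptions (\ref{Gi}) and (\ref{bij}) into the deformation formulas of Lemma \ref{beta1} and determine precisely when the $b^i$-coefficient of $\tG$ collapses to a scalar multiple of $\ta^2$.

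The first observation is that the right-hand side of (\ref{bij}) is symmetric in $i,j$, so $\sij=0$ and $\b$ is closed; hence every $s$-quantity ($s^i{}_0,s_0,\si,s^i$) vanishes and $\rij=\bij$ is exactly the tensor in (\ref{bij}). I would then record the contractions that enter Lemma \ref{beta1}. Setting $\Delta:=1+(k_1+k_3)b^2+k_2b^4$, a short computation yields $r_{00}=2\tau\{(1+k_1b^2)\a^2+(k_3+k_2b^2)\b^2\}$, $r_0=2\tau\Delta\b$, $r=2\tau\Delta b^2$ and, crucially, $r^i=2\tau\Delta b^i$. This last identity is the decisive structural point: because $\rij$ is assembled only from $\aij$ and $\bi\bj$, the raised vector $r^i$ is proportional to $b^i$ rather than $y^i$.

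Substituting these, together with $\G=\xi y^i-\tau(k_1\a^2+k_2\b^2)b^i$, into the first formula of Lemma \ref{beta1}, all $s$-terms drop out and $\b^2r^i$ is proportional to $b^i$, so the only surviving $y^i$-contribution is the original $\xi y^i$ while everything else is a multiple of $b^i$. Collecting that multiple I expect $\tG=\xi y^i-\tau B b^i$ with
$$B=\f{k_1+\kappa}{1-\kappa b^2}\a^2+\f{k_2+\kappa k_3+\kappa'\Delta}{1-\kappa b^2}\b^2,$$
the $b^2$-weighted cross terms cancelling pairwise inside each coefficient. Then (\ref{tGi}) holds with a genuine scalar $\tilde\tau=\tilde\tau(x)$ exactly when $B$ is proportional to $\ta^2=\a^2-\kappa\b^2$; since $\a^2$ and $\b^2$ are linearly independent quadratic forms ($\b\neq0$), this amounts to $k_2+\kappa k_3+\kappa'\Delta=-\kappa(k_1+\kappa)$, which rearranges to exactly (\ref{kappa}), with $\tilde\tau=\tau\f{k_1+\kappa}{1-\kappa b^2}$. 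This settles the claimed equivalence.

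For the ``specially'' clause I would verify by direct substitution that $\kappa=-(k_1+k_3+k_2b^2)$ solves (\ref{kappa}) — a polynomial identity in $b^2$ that collapses termwise. With this $\kappa$ one has $1-\kappa b^2=\Delta$ and $k_1+\kappa=-(k_3+k_2b^2)$, so $\tilde\tau=-\tau(k_3+k_2b^2)/\Delta$, giving (\ref{tGi'}). For (\ref{tbij'}) I would repeat the substitution in the second formula of Lemma \ref{beta1}: with $\sj=0$ and $\ri=2\tau\Delta\bi$, the factor $1+\f{\kappa b^2}{1-\kappa b^2}$ telescopes to $\f{1}{1-\kappa b^2}$, and after rewriting $\aij=\taij+\kappa\bi\bj$ the $\taij$-coefficient becomes $2\tau(1+k_1b^2)/\Delta$, while the $\bi\bj$-coefficient, using the identity $(k_3+k_2b^2)+\kappa'\Delta b^2+(1+k_1b^2)\kappa=-(k_1+k_2b^2)\Delta$, reduces to $-2\tau(k_1+k_2b^2)$. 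The main difficulty is bookkeeping rather than conceptual: keeping the $\a^2$-, $\b^2$-, $y^i$- and $b^i$-channels separate through Lemma \ref{beta1} and confirming that the stray $b^2$-weighted terms cancel so that $B$ is a clean combination of $\a^2$ and $\b^2$; the check that the specific $\kappa$ solves (\ref{kappa}) and the two polynomial identities above are routine but must be carried out with care.
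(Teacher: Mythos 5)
Your proposal is correct and follows essentially the same route as the paper: substitute (\ref{Gi}) and (\ref{bij}) into Lemma \ref{beta1}, collect the $b^i$-coefficient of $\tG$ as a combination of $\a^2$ and $\b^2$ (your $B$ agrees exactly with the paper's displayed intermediate formula), read off (\ref{kappa}) from proportionality to $\ta^2$, then verify the special solution and compute (\ref{tGi'}) and (\ref{tbij'}) using $\taij=\aij+\kappa\bi\bj$ with $1-\kappa b^2=1+(k_1+k_3)b^2+k_2b^4$. The only point the paper includes that you omit is the one-line check, via Lemma \ref{condition2}, that $1-\kappa b^2=1+(k_1+k_3)b^2+k_2b^4>0$, so that this particular $\kappa$ is admissible as a deformation factor, i.e.\ $\ta$ is genuinely a (positive definite) Riemannian metric.
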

\begin{proof}
By Lemma \ref{beta1}, the formula of $\tG$ are given by
\begin{eqnarray*}
\tG&=&\xi
y^i-\f{\tau}{1-\kappa b^2}\big\{(k_1+\kappa)\a^2\\
&&+\left[k_2+k_3\kappa+\kappa'(1+(k_1+k_3)b^2+k_2b^4)\right]\b^2\big\}b^i.\nonumber
\end{eqnarray*}
Obviously, $\tG$ is in the form (\ref{tGi}) if and only if
$$-\kappa(k_1+\kappa)=k_2+k_3\kappa+\kappa'\left\{1+(k_1+k_3)b^2+k_2b^4\right\},$$
which is equivalent to (\ref{kappa}). It is easy to verify that $\kappa=-(k_1+k_3+k_2b^2)$ is a
\newpage
\noindent solution of
(\ref{kappa}), and it can be chosen as a deformation factor since
$$1-\kappa b^2=1+(k_1+k_3)b^2+k_2b^4>0$$
by Lemma \ref{condition2}. By Lemma \ref{beta1} again we obtain (\ref{tGi'}) and (\ref{tbij'}). Notice that we use the fact $\taij=\aij-u\bi\bj$ here.
\end{proof}

\begin{lemma}\label{step2}
Choose the factor of the second step of $\b$-deformations as
$$\rho(b^2)=\int\f{(k_3+k_2b^2){\mbox d}\,b^2}{2\{1+(k_1+k_3)b^2+k_2b^4\}}.$$
Then
\begin{eqnarray}
\hG&=&\left\{\xi+2\tau(k_3+k_2b^2)\b\right\}y^i,\label{hGi'}\\
\hbij&=&2\tau\left\{e^{-2\rho(b^2)}\hat a_{ij}-(k_1+2k_3+3k_2b^2)b_ib_j\right\}.\label{hbij'}
\end{eqnarray}
In particular, $\ha$ is projectively flat.
\end{lemma}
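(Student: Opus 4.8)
The plan is to verify the two identities \eqref{hGi'} and \eqref{hbij'} by direct substitution into Lemma \ref{beta2}, using the output of Lemma \ref{step1} as input, and then to read off projective flatness from the resulting form of $\hG$. The engine of the whole computation is the specific choice of $\rho$, which is manufactured precisely so that the coefficient of $b^i$ in $\hG$ cancels. So the first thing I would do is record the key fact that $\kappa = -(k_1+k_3+k_2b^2)$ was fixed in Lemma \ref{step1}, hence $\kappa' = -k_2$ and $1-\kappa b^2 = 1+(k_1+k_3)b^2+k_2b^4$; I will abbreviate $D := 1+(k_1+k_3)b^2+k_2b^4$ throughout. From the chosen $\rho$ we have $\rho' = \f{k_3+k_2b^2}{2D}$, and $e^{-2\rho} = D^{-1}\cdot(\text{const})$ since differentiating $-2\rho$ gives $-\f{k_3+k_2b^2}{D} = -\f{D'}{D}$ (because $D' = k_1+k_3+2k_2b^2$\,---\,wait, this is exactly the point where I must be careful, since $D' = (k_1+k_3)+2k_2b^2 \neq k_3+k_2b^2$ in general).

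The computation of $\hG$ is where I would start the substitution. Feeding \eqref{tGi'} into the $\hG$ formula of Lemma \ref{beta2}, the term $\tG$ already contributes $\xi y^i + \f{\tau(k_3+k_2b^2)}{D}\ta^2 b^i$, while the $\rho'$-correction contributes $\rho'\{2(r_0+s_0)y^i - \ta^2(r^i+s^i+\f{\kappa}{1-\kappa b^2}rb^i)\}$ (here $\a^2-\kappa\b^2 = \ta^2$). Under the structural equations \eqref{Gi}–\eqref{bij}, the quantities $r_{ij}, s_{ij}, r^i, s^i, r$ are all expressible through $\tau$, $a_{ij}$, $b_i$ and $\b$; in particular $\b$-deformed metrics inherit $s_{ij}=0$-type or conformal-type structure from \eqref{bij} (since $\bij$ is symmetric, $\b$ is closed, so $\sij=0$, $s^i{}_0 = 0$, $s_0=0$). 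This is a substantial simplification I would exploit early: with $s_{ij}=0$ all the $s$-terms drop out, so $r_i+s_i = r_i$, $r_0+s_0=r_0$, etc. Then the $b^i$-coefficient in $\hG$ becomes a rational function of $b^2$ times $\ta^2 b^i$ (plus $\b^2 b^i$ pieces), and the claim is that the chosen $\rho$ makes it vanish identically, leaving $\hG = \{\xi + 2\tau(k_3+k_2b^2)\b\}y^i$. I expect the $y^i$-coefficient to assemble from the $\xi y^i$ already present plus the $2\rho'\cdot 2 r_0$ contribution, using $r_0 = \tau(1+k_1b^2)\b + \ldots$ from \eqref{bij} contracted appropriately; matching this against $2\tau(k_3+k_2b^2)\b$ is the consistency check.

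For $\hbij$ I would substitute \eqref{tbij'} into the $\hbij$ formula of Lemma \ref{beta2}. The $\taij$ term carries the factor $\f{2\tau(1+k_1b^2)}{D}$, and the $\rho'$-correction involves $\bi r_j + \bj r_i - \f{1}{1-\kappa b^2}r(\taij - \kappa\bi\bj)$. Using $\taij = \aij - u\bi\bj$ (with $u$ as in Lemma \ref{step1}) and the explicit $r_j$, $r$ from \eqref{bij}, every term reorganizes into a multiple of $\hat a_{ij}$ plus a multiple of $\bi\bj$. The coefficient of $\hat a_{ij}$ should collapse to $2\tau e^{-2\rho(b^2)}$\,---\,this is where the relation between $e^{-2\rho}$ and $D^{-1}$ is used, converting $\taij$ into $\haij = e^{2\rho}\taij$, i.e. $\taij = e^{-2\rho}\haij$\,---\,and the $\bi\bj$-coefficient should collapse to $-2\tau(k_1+2k_3+3k_2b^2)$. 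Verifying these two collapses is routine bookkeeping once the $s$-terms are killed. The main obstacle, and the place I would double-check most carefully, is the sign and degree accounting in the $b^i$-coefficient of $\hG$: the cancellation hinges on $\rho'$ being exactly half the logarithmic derivative of the right rational function, and an error of a factor of $2$ or a swapped $(k_1+k_3)$ versus $k_3$ would destroy it. Once $\hG = \hat\xi y^i$ with $\hat\xi := \xi + 2\tau(k_3+k_2b^2)\b$ a $1$-form, projective flatness of $\ha$ follows immediately from the characterization \eqref{GPI}.
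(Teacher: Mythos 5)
Your proposal is correct and takes essentially the same route as the paper: the paper's proof likewise feeds the output of Lemma \ref{step1} into Lemma \ref{beta2}, notes that $\ha$ is projectively flat exactly when $\rho'=\f{k_3+k_2b^2}{2\{1+(k_1+k_3)b^2+k_2b^4\}}$ (the vanishing of the $b^i$-coefficient), and then reads off (\ref{hGi'}) and (\ref{hbij'}) using $\haij=e^{2\rho}\taij$. Two small notational blemishes in your sketch do not affect the argument: the term in Lemma \ref{beta2} is $r(\aij-\kappa\bi\bj)=r\,\taij$, not $r(\taij-\kappa\bi\bj)$, and no identity relating $e^{-2\rho}$ to $\{1+(k_1+k_3)b^2+k_2b^4\}^{-1}$ is needed anywhere --- only the definitional $\taij=e^{-2\rho}\haij$, exactly as your parenthetical self-correction anticipated.
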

\begin{proof}
By Lemma \ref{beta2}, the formula of $\hG$ are given by
$$\hG=(\xi+2\rho'r_0)y^i+\tau\left\{\f{(k_3+k_2b^2)}{1+(k_1+k_3)b^2+k_2b^4}-2\rho'\right\}\ta^2b^i,$$
so $\ha$ is projectively flat if and only if
\begin{eqnarray}\label{rho'}
\rho'=\f{k_3+k_2b^2}{2\left\{1+(k_1+k_3)b^2+k_2b^4\right\}}.\label{rho2}
\end{eqnarray}
By Lemma \ref{beta2} again we obtain (\ref{hGi'}) and (\ref{hbij'}). Notice that we use the fact $\haij=e^{2\rho}\taij$ here.
\end{proof}

Now, we can finish the whole deformation procedure by taking the third step of $\b$-deformations, which is aim to make the $1$-form conformal, just as we have done in Section \ref{sq} for Berwald's metrics.
\begin{lemma}\label{step3}
Choose the factor of the third step of $\b$-deformations as $$\nu(b^2)=e^{\rho(b^2)}\sqrt{1+(k_1+k_3)b^2+k_2b^4}.$$
Then
\begin{eqnarray*}
\bG&=&\left\{\xi+2\tau(k_3+k_2b^2)\b\right\}y^i,\\
\bbij&=&2\tau e^{-\rho}\sqrt{1+(k_1+k_3)b^2+k_2b^4}\bar a_{ij}.
\end{eqnarray*}
In particular, $\bb$ is closed and conformal with respect to $\ba$.
\end{lemma}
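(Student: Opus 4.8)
The plan is to obtain the spray-coefficient claim essentially for free from the deformation formulas, and then to reduce the covariant-derivative claim to a single first-order identity for the factor $\nu$. By Lemma \ref{beta3} we have $\bG=\hG$, and $\hG$ was already computed in (\ref{hGi'}); hence the formula for $\bG$ needs no further work, and all the content lies in the expression for $\bbij$.

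For that I would start from the second formula of Lemma \ref{beta3}, namely $\bbij=\nu\hbij+2\nu'\bi(\rj+\sj)$, and substitute $\hbij$ from (\ref{hbij'}). The key preliminary observation is that, under the hypothesis (\ref{bij}), the tensor $\bij$ is symmetric, so $\sij=0$ (equivalently $\b$ is closed with respect to $\a$) and $\rij=\bij$. Contracting (\ref{bij}) with $b^i$ then gives $\rj+\sj=\rj=2\tau\{1+(k_1+k_3)b^2+k_2b^4\}\bj$, so the correction term $2\nu'\bi(\rj+\sj)$ is a pure multiple of $\bi\bj$ and in particular symmetric. Collecting terms, $\bbij$ splits into a multiple of $\haij=\baij$ plus a multiple of $\bi\bj$.

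The heart of the matter is that the $\bi\bj$ coefficient must vanish. Writing $D:=1+(k_1+k_3)b^2+k_2b^4$, this coefficient is proportional to $-\nu(k_1+2k_3+3k_2b^2)+2\nu'D$, so I need $\nu'/\nu=(k_1+2k_3+3k_2b^2)/(2D)$, where the prime is differentiation in $b^2$. I would check that the proposed $\nu=e^{\rho}\sqrt{D}$ does the job by logarithmic differentiation: $\nu'/\nu=\rho'+D'/(2D)$, and inserting $\rho'$ from (\ref{rho'}) together with $D'=(k_1+k_3)+2k_2b^2$ yields exactly $(k_1+2k_3+3k_2b^2)/(2D)$. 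With the $\bi\bj$ term gone, the surviving term is $2\tau\nu e^{-2\rho}\baij=2\tau e^{-\rho}\sqrt{D}\,\baij$, which is the asserted formula. Finally, since $\bbij$ is a scalar multiple of $\baij$, its antisymmetric part vanishes, so $\bb$ is closed, and its symmetric part is proportional to $\baij$, so $\bb$ is conformal, giving the last assertion.

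I expect no genuine obstacle: the computation is routine once $\sij=0$ and the explicit value of $\rj$ are in hand. The only point demanding care is the bookkeeping in matching $\nu'/\nu$ against $\rho'$, that is, confirming the algebraic identity $\rho'+D'/(2D)=(k_1+2k_3+3k_2b^2)/(2D)$; this is where a sign or coefficient slip would be easiest to make.
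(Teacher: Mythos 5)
Your proposal is correct and follows essentially the same route as the paper: substitute (\ref{hbij'}) into the formula of Lemma \ref{beta3}, use the contraction of (\ref{bij}) with $b^i$ to evaluate $r_j+s_j$, and kill the $\bi\bj$ term via the identity $\nu'/\nu=\rho'+D'/(2D)=(k_1+2k_3+3k_2b^2)/(2D)$. The only (immaterial) difference is that the paper derives this ODE for $\nu$ and solves it to obtain $\nu=Ce^{\rho}\sqrt{D}$, whereas you verify by logarithmic differentiation that the stated $\nu$ satisfies it.
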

\begin{proof}
By Lemma \ref{beta3}, $\ba$ is still projectively flat and the covariant derivative of $\hbij$ is given by
\begin{eqnarray*}
\bbij=2\tau\left\{\nu e^{-2\rho}\baij-\left[\nu(k_1+2k_3+3k_2b^2)-2\nu'\left(1+(k_1+k_3)b^2+k_2b^4\right)\right]\bi\bj\right\}.
\end{eqnarray*}
Obviously that
$$\bbij=c(x)\bar a_{ij}$$
if and only if
\begin{eqnarray*}
\f{\nu'}{\nu}&=&\f{k_1+2k_3+3k_2b^2}{2\left\{1+(k_1+k_3)b^2+k_2b^4\right\}}
=\f{k_1+k_3+2k_2b^2}{2\left\{1+(k_1+k_3)b^2+k_2b^4\right\}}+\rho'.
\end{eqnarray*}
\newpage
\noindent The solutions of the above equation are
$$\nu=Ce^{\rho(b^2)}\sqrt{1+(k_1+k_3)b^2+k_2b^4},$$
without loss of generality we can take $C=1$.
\end{proof}

Until now the discussions can be summarized as the diagram below.

\[\begin{CD}
\a @>\ta=\sqrt{\a^2-\kappa(b^2)\b^2}>> \ta @>\ha=e^{\rho(b^2)}\ta>> \ha @>\,\ \ \ba=\ha\ \ \ >> \ba~(\textrm{projectively flat})\ \ \ \ \ \ \ \\
\b @>\ \ \ \ \ \ \ \tb=\b\ \ \ \ \ \ \ >> \tb @>\ \ \ \hb=\tb\ \ \ \, >> \hb @>\bb=\nu(b^2)\hb>> \bb~(\textrm{closed and conformal})
\end{CD}\]
\vspace{5bp}

On the other hand, it is easy to verify that the inverse of $\taij$ is given by $\tilde a^{ij}=a^{ij}+\f{\kappa}{1-\kappa b^2}b^ib^j$, so
\begin{eqnarray*}
\bar b^2:=\|\bb\|^2_{\ba}=\nu^2\|\hb\|^2_{\ha}=\nu^2e^{-2\rho}\|\tb\|^2_{\ta}
=\nu^2e^{-2\rho}b_i\left(a^{ij}+\f{\kappa}{1-\kappa b^2}b^ib^j\right)b_j=b^2.
\end{eqnarray*}
For this reason, the whole process of the above $\b$-deformations is invertible. Hence, one can obtain the data $\a$ and $\b$ satisfying (\ref{Gi}) and (\ref{bij}) by taking the invert $\b$-deformations. See the diagram below.

\[\begin{CD}
\a @<\a=\sqrt{\ta^2+\kappa(\bar b^2)\tb^2}<< \ta @<\ta=e^{-\rho(\bar b^2)}\ha<< \ha @<\,\ \ \ \ha=\ba\ \ \ \ \,<< \ba~(\textrm{projectively flat})\ \ \ \ \ \ \ \\
\b @<\ \ \ \ \ \ \ \b=\tb\ \ \ \ \ \ \ << \tb @<\ \ \ \ \tb=\hb\ \ \ \ << \hb @<\hb=\nu^{-1}(\bar b^2)\bb<< \bb~(\textrm{closed and conformal})
\end{CD}\]
\vspace{5bp}

Note that by (\ref{rho'}) the deformation factor $\eta(\bar b^2):=e^{-\rho(\bar b^2)}$ can be chosen as
\begin{eqnarray}\label{etab}
\eta(\bar b^2)=\exp\left\{-\int_0^{\bar b^2}\f{k_3+k_2t}{2\{1+(k_1+k_3)t+k_2t^2\}}\ud t\right\},
\end{eqnarray}
which can be express as elementary functions. So we have
\begin{theorem}\label{main}
Let $F=\pab$ be a Finsler metric on a $n$-dimensional manifold $M$ with $n\geq3$, where the function $\ps$ satisfying (\ref{ode}). Then $F$ is locally projectively flat if and only if $\a$ and $\b$ can be expressed as
\begin{eqnarray*}
\a&=&\eta(\bar b^2)\sqrt{\ba^2-\f{(k_1+k_3+k_2\bar
b^2)}{1+(k_1+k_3)\bar
b^2+k_2\bar b^4}\bb^2},\\
\b&=&\f{\eta(\bar b^2)}{\left\{1+(k_1+k_3)\bar b^2+k_2\bar
b^4\right\}^\f{1}{2}}\bb,
\end{eqnarray*}
where $\ba$ is a locally projectively flat Riemannian metric, $\bb$ is a closed 1-form and conformal with respect to $\ba$, $\bar b:=\|\bb\|_{\ba}$.
The deformation factor $\eta(\bar b^2)$ is determined by the coefficients $k_1,k_2,k_3$ and given in the following five case,
\begin{enumerate}
\item When $k_2=0,~k_1+k_3=0$,
$$\eta(\bar b^2)=\exp\left\{-\f{k_3\bar b^2}{2}\right\};$$
\newpage
\item When $k_2=0,~k_1+k_3\neq0$,
$$\eta(\bar b^2)=\left\{1+(k_1+k_3)\bar b^2\right\}^{-\f{k_3}{2(k_1+k_3)}};$$
\item When $k_2\neq0,~\Delta_1>0$,
$$\eta(\bar b^2)=\f{\left\{\f{\sqrt{\Delta_1}+k_1+k_3}{\sqrt{\Delta_1-k_1-k_3}}\cdot\f{\sqrt\Delta_1-k_1-k_3-2k_2\bar b^2}{\sqrt\Delta_1+k_1+k_3+2k_2\bar
b^2}\right\}^\f{k_1-k_3}{4\sqrt\Delta_1}}{\sqrt[4]{1+(k_1+k_3)\bar
b^2+k_2\bar b^4}};$$
\item When $k_2\neq0,~\Delta_1=0$,
$$\eta(\bar b^2)=\f{\sqrt{2}\exp\left\{\f{k_3-k_1}{k_1+k_3}\left[\f{1}{2+(k_1+k_3)\bar b^2}-\f{1}{2}\right]\right\}}{\sqrt{2+(k_1+k_3)\bar b^2}};$$
\item When $k_2\neq0,~\Delta_1<0$,
$$\eta(\bar b^2)=\f{\exp\left\{\f{k_1-k_3}{2\sqrt{-\Delta_1}}\left(\arctan\f{k_1+k_3+2k_2\bar
b^2}{\sqrt{-\Delta_1}}-\arctan\f{k_1+k_3}{\sqrt{-\Delta_1}}\right)\right\}}{\sqrt[4]{1+(k_1+k_3)\bar b^2+k_2\bar b^4}},$$
\end{enumerate}
where $\Delta_1:=(k_1+k_3)^2-4k_2$.
\end{theorem}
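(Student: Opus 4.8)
The plan is to prove the equivalence by composing the three $\beta$-deformation lemmas into one invertible transformation, and then to evaluate the integral defining $\eta$ case by case. For the ``only if'' direction I would invoke the standing non-triviality assumption of this section: since $F$ is locally projectively flat and non-trivial (not of Randers type, $\beta$ not parallel, $\ud b$ behaving uniformly), Z. Shen's criterion gives the structural equations (\ref{Gi}) and (\ref{bij}) for $G^i_\alpha$ and $b_{i|j}$ in terms of $\xi$, $\tau$ and the fixed constants $k_1,k_2,k_3$. I would then run the composite $\beta$-deformation $\alpha\to\tilde\alpha\to\hat\alpha=\bar\alpha$, $\beta\to\tilde\beta\to\hat\beta\to\bar\beta$ using the factors $\kappa=-(k_1+k_3+k_2b^2)$ of Lemma \ref{step1}, $\rho$ of Lemma \ref{step2}, and $\nu$ of Lemma \ref{step3}. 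By the conclusions of these lemmas, $\bar\alpha$ is projectively flat and $\bar\beta$ is closed and conformal with respect to $\bar\alpha$, which is exactly the claimed data.

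The decisive bookkeeping step is invertibility: to solve back for $\alpha,\beta$ in terms of $\bar\alpha,\bar\beta$, I would first verify the identity $\bar b^2=b^2$, using $\tilde a^{ij}=a^{ij}+\frac{\kappa}{1-\kappa b^2}b^ib^j$ to compute the norm through the three steps. Since every deformation factor is a function of $b^2$ alone, $\bar b=b$ means the factors are literally unchanged under the relabelling, so each of the three steps reverses algebraically: inverting the length change recovers $\hat\beta$, inverting the conformal change recovers $\tilde\alpha$, and inverting the stretch $\tilde\alpha=\sqrt{\alpha^2-\kappa\beta^2}$ recovers $\alpha$. Substituting $\kappa(\bar b^2)=-(k_1+k_3+k_2\bar b^2)$ into the composed inverse yields the displayed formulas for $\alpha$ and $\beta$ with the single scalar factor $\eta(\bar b^2)=e^{-\rho(\bar b^2)}$. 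The ``if'' direction is then immediate in reverse: from projectively flat $\bar\alpha$ and closed conformal $\bar\beta$ the inverse deformation produces $\alpha,\beta$ obeying (\ref{Gi})--(\ref{bij}), whence $F$ is projectively flat by Shen's criterion.

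It remains to evaluate $\eta(\bar b^2)=\exp\{-\int_0^{\bar b^2}\frac{k_3+k_2t}{2(1+(k_1+k_3)t+k_2t^2)}\,\ud t\}$ explicitly. The natural manipulation is to split the numerator as $k_3+k_2t=\frac{1}{2}\big[(k_1+k_3)+2k_2t\big]+\frac{k_3-k_1}{2}$. The first piece integrates to $\frac{1}{4}\ln(1+(k_1+k_3)t+k_2t^2)$ and, after the outer minus sign, is the common source of the factor $\sqrt[4]{1+(k_1+k_3)\bar b^2+k_2\bar b^4}$ appearing in the denominators of the $k_2\neq0$ cases (degenerating to $\sqrt{2+(k_1+k_3)\bar b^2}$ when $\Delta_1=0$). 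The remaining integral $\frac{k_3-k_1}{4}\int\frac{\ud t}{1+(k_1+k_3)t+k_2t^2}$ is a standard rational integral whose antiderivative is governed entirely by the sign of the discriminant of the quadratic denominator, which is precisely $\Delta_1=(k_1+k_3)^2-4k_2$.

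The hard part will be organizing this last integration into the five listed cases without sign errors: the degenerate family $k_2=0$ (denominator linear, split again according to $k_1+k_3=0$), and the three genuinely quadratic cases $\Delta_1>0$ (distinct real roots, partial fractions into logarithms), $\Delta_1=0$ (double root), and $\Delta_1<0$ (complex roots, an $\arctan$). In each case the constant of integration is pinned down by the lower limit $t=0$, which enforces $\eta(0)=1$; matching the resulting closed forms to the five displayed expressions, including the numerical and algebraic prefactors, is where essentially all the remaining care is required, even though each individual antiderivative is elementary.
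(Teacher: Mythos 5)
Your proposal is correct and follows essentially the same route as the paper: invoke Z.~Shen's criterion to get (\ref{Gi})--(\ref{bij}), compose the three $\beta$-deformations of Lemmas \ref{step1}--\ref{step3}, establish invertibility via the identity $\bar b^2=b^2$ computed with $\tilde a^{ij}=a^{ij}+\frac{\kappa}{1-\kappa b^2}b^ib^j$, and then evaluate the integral (\ref{etab}) according to the sign of $\Delta_1$ (and whether $k_2=0$). Your splitting of the integrand into a logarithmic-derivative part plus a rational part governed by the discriminant reproduces exactly the five displayed forms of $\eta$, a computation the paper itself leaves implicit.
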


Actually, if $\a$ $\b$ and $\ps$ satisfy the given conditions in Theorem \ref{main}, then the Finsler metric is projectively flat for any dimension. That is because in this case the spray coefficients of $F$ is in the form $G^i=Py^i$ by (\ref{GF}). So the sufficiency of Theorem \ref{main} is also true when $n=2$.

On the other hand, for a given non-trivial projectively flat $\ab$-metric, the suitable deformation is possibly non-unique. Take Berwald's metrics for example, the corresponding function of the Berwald's metrics $F=\sq$ is $\ps=(1+s)^2$, which is a solution of (\ref{ode}) with $k_1=2$, $k_2=0$ and $k_3=-3$. So by the above discussions, we prove that $F$ is locally projectively flat if and only if $\ba=(1-b^2)^\frac{3}{2}\sqrt{\a^2-\b^2}$ is locally projectively flat and $\bb=(1-b^2)^2\b$ is closed and conformal with respect to $\ba$. In this case, the invert deformations are given by $\a=(1-\bar b^2)^{-2}\sqrt{(1-\bar b^2)\ba^2+\bb^2}$ and $\b=(1-\bar b^2)^{-2}\bb$, hence $F$ can be rewrote as
\begin{eqnarray}\label{Berwaldepsimple2}
F=\f{(\sqrt{(1-\bar b^2)\ba^2+\bb^2}+\bb)^2}{(1-\bar b^2)^2\sqrt{(1-\bar b^2)\ba^2+\bb^2}}.
\end{eqnarray}
Such expression for Berwald's metrics are quite different from which we provided in Section \ref{sq}. However, it is closer to the classical Berwald's metric (\ref{BerwaldF}) in a sense.

The non-uniqueness of $\b$-deformations can be observed in the proof of Lemma \ref{step1}. It it clear that one can choose $\kappa=0$ as the deformation factor when $k_2=0$. However, here we choose an universal deformation factor.

Come back to Berwald's metrics. We can also obtain the expression (\ref{Berwaldepsimple}) by the $\b$-deformations using here, but if we want to do so we need to change the expression of Berwald's metrics. Take
$$\ps=\f{(\sqrt{1+s^2}+s)^2}{\sqrt{1+s^2}},$$
which is a solution of (\ref{ode}) with $k_1=3$, $k_2=0$, and $k_3=-2$. Then by Theorem \ref{main} we can see that $F=\pab$ is locally projectively flat if and only if $\a=(1+\bar b^2)\sqrt{\ba^2-(1+\bar b^2)^{-1}\bb^2}$ and $\b=\sqrt{1+\bar b^2}\bb$. In this case, $F$ is expressed as (\ref{Berwaldepsimple}).

About the non-uniqueness we will go to discuss deeply in Section \ref{non-uniqueness}.

\section{Solutions of Equation (\ref{ode})}\label{6}

In this section we will provide the solves of (\ref{ode}) in integral form. Firstly, it is worth to mention that the related function $f(s)$ are surprisingly similar to the deformation factor (\ref{etab}), but we don't known how to explain such fantastic phenomenon.

On the other hand, recall that for a given $\ab$-metric $F=\pab$, the corresponding function $\ps$ must be positive on some symmetric open interval $(-b_o,b_o)$, so after necessary scaling we can {\it always} assume $\p(0)=1$.
\begin{lemma}\label{solves}
The solutions of equation (\ref{ode}) with the initial conditions
$\phi(0)=1,~\phi'(0)=\epsilon$ are given by
\begin{eqnarray}\label{solution}
\ps=1+\epsilon s+\int^s_0\int^\tau_0\f{k_1+k_2\sigma^2}{1+(k_1+k_3)\sigma^2
+k_2\sigma^4}f(\sigma)\,\ud\sigma\ud\tau.\label{solvesofphi}
\end{eqnarray}
The function $f(s)$ is determined by the coefficients $k_1,k_2,k_3$ and given in the following five case,
\begin{enumerate}
\item When $k_2=0,~k_1+k_3=0$,
$$f(s)=\exp\left\{-\f{k_1s^2}{2}\right\};$$
\item When $k_2=0,~k_1+k_3\neq0$,
$$f(s)=\left\{1+(k_1+k_3)s^2\right\}^{-\f{k_1}{2(k_1+k_3)}};$$
\item When $k_2\neq0,~\Delta_1>0$,
$$f(s)=\f{\left\{\f{\sqrt{\Delta_1}+k_1+k_3}{\sqrt{\Delta_1}-k_1-k_3}\cdot
\f{\sqrt\Delta_1-k_1-k_3-2k_2s^2}{\sqrt\Delta_1+k_1+k_3+2k_2s^2}\right\}
^\f{k_3-k_1}{4\sqrt\Delta_1}}{\sqrt[4]{1+(k_1+k_3)s^2+k_2s^4}};$$
\item When $k_2\neq0,~\Delta_1=0$,
$$f(s)=\f{\sqrt{2}\exp\left\{\f{k_1-k_3}{k_1+k_3}\left[\f{1}{2+(k_1+k_3)s^2}-\f{1}{2}\right]\right\}}
{\sqrt{2+(k_1+k_3)s^2}};$$
\item When $k_2\neq0,~\Delta_1<0$,
$$f(s)=\f{\exp\left\{\f{k_3-k_1}{2\sqrt{-\Delta_1}}
\left(\arctan\f{k_1+k_3+2k_2s^2}{\sqrt{-\Delta_1}}-\arctan\f{k_1+k_3}{\sqrt{-\Delta_1}}\right)\right\}}
{\sqrt[4]{1+(k_1+k_3)s^2+k_2s^4}},$$
\end{enumerate}
where $\Delta_1:=(k_1+k_3)^2-4k_2$.
\end{lemma}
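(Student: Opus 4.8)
The plan is to collapse the second-order equation (\ref{ode}) to a first-order separable one by means of the combination that already sits on its right-hand side. Put $\psi(s):=\ps-s\p'(s)$; this is the same quantity whose positivity governs whether $F$ is a Finsler metric (take $b=s$ in (\ref{conditionofphi}), as in the proof of Lemma \ref{condition2}). The key identity is $\psi'(s)=-s\p''(s)$, so that $\p''(s)=-\psi'(s)/s$. Substituting into (\ref{ode}) and rearranging turns it into the separable equation
\begin{eqnarray*}
\f{\psi'(s)}{\psi(s)}=-\f{s(k_1+k_2s^2)}{1+(k_1+k_3)s^2+k_2s^4}.
\end{eqnarray*}
Since the initial data give $\psi(0)=\p(0)=1$, integrating produces
\begin{eqnarray*}
\psi(s)=\exp\left\{-\int_0^s\f{t(k_1+k_2t^2)}{1+(k_1+k_3)t^2+k_2t^4}\,\ud t\right\},
\end{eqnarray*}
and I will check that the right-hand side is exactly the function $f(s)$ in the statement, i.e. that $f(s)=\ps-s\p'(s)$.

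Granting $\psi=f$, recovering $\p$ is immediate: equation (\ref{ode}) now reads $\p''(s)=\f{k_1+k_2s^2}{1+(k_1+k_3)s^2+k_2s^4}f(s)$, and integrating twice against $\p(0)=1$ and $\p'(0)=\epsilon$ delivers the double-integral formula (\ref{solution}). This final assembly is routine.

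The real content is evaluating the integral that defines $f(s)$ in closed form, which is where the five cases originate. The substitution $u=t^2$ recasts it as $\f{1}{2}\int_0^{s^2}\f{k_1+k_2u}{1+(k_1+k_3)u+k_2u^2}\,\ud u$, so everything hinges on the denominator $1+(k_1+k_3)u+k_2u^2$. If $k_2=0$ the denominator is linear, giving the elementary cases 1 and 2 according to whether $k_1+k_3$ vanishes. If $k_2\neq0$ the shape of the antiderivative is dictated by the discriminant $\Delta_1=(k_1+k_3)^2-4k_2$: two distinct real roots $(\Delta_1>0)$ require partial fractions and yield the logarithm/power expression of case 3; a repeated root $(\Delta_1=0)$ contributes a rational term, producing case 4; and a conjugate pair $(\Delta_1<0)$ gives the arctangent of case 5.

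The main obstacle is bookkeeping rather than ideas: I must perform these partial-fraction and completing-the-square computations precisely enough that the integration constants and the normalisation $f(0)=1$ reproduce the exact exponents and coefficients in each displayed formula. Once the reduction to $\psi$ is in place there is no conceptual difficulty left, so the work consists of organising the five-way case split and confirming each closed form, for example by differentiating it back (as one checks directly for case 2, where $\{1+(k_1+k_3)s^2\}^{-\f{k_1}{2(k_1+k_3)}}$ indeed satisfies the separable equation).
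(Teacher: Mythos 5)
Your proposal is correct and follows essentially the same route as the paper: the paper also sets $f(s):=\ps-s\p'(s)$, uses $f'(s)=-s\p''(s)$ to reduce (\ref{ode}) to the separable first-order equation with $f(0)=1$, integrates to the exponential form, and then recovers $\p$ by double integration under the stated initial conditions. The only cosmetic difference is in obtaining the five closed forms: you evaluate the integral directly via $u=t^2$ and partial fractions, while the paper shortcuts this by observing the integral coincides with the deformation factor (\ref{etab}) after exchanging $k_1$ and $k_3$, so the expressions can be read off from Theorem \ref{main}.
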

\newpage
\begin{proof}
Set $f(s):=\ps-s\p'(s)$, then
$f'(s)=-s\p''(s)$. Now (\ref{ode}) becomes a equation about $f(s)$ as following
\begin{eqnarray}
\{1+(k_1+k_3)s^2+k_2s^4\}f'(s)=-s(k_1+k_2s^2)f(s),\label{f}
\end{eqnarray}
and the initial conditions turn to be $f(0)=1$. So
$$f(s)=\exp\left\{\int_0^s\f{-t(k_1+k_2t^2)}{1+(k_1+k_3)t^2+k_2t^4}\ud t\right\}.$$
Notice that the similarity between the above integral and (\ref{etab}), one can easily get the analytic expression of $f(s)$ in terms of $\eta$ listing in Theorem \ref{main} by exchanging the role of $k_1$ and $k_3$.

As a result of the equalities of $f'(s)=-s\p''(s)$ and (\ref{f}), $\ps$ satisfies
$$\p''(s)=\f{k_1+k_2s^2}{1+(k_1+k_3)s^2+k_2s^4}f(s),$$
so it can be expressed as (\ref{solvesofphi}) under the given initial conditions.
\end{proof}

In a sense, (\ref{solution}) is the best form for the solutions, because most of them are non-elementary. But if we consider some specific parameters, the expressions can be more explicit. Take $k_1=\f{1}{p}$, $k_2=0$ and $k_3=\f{r-1}{p}$, then (\ref{ode}) becomes
\begin{eqnarray}\label{sp}
\ps-s\p'(s)=(p+rs^2)\p''(s).
\end{eqnarray}
The corresponding $\ab$-metrics are discussed in \cite{CL} and \cite{S}. Notice that $\Delta_1>0$ when $r\neq0$ and $\Delta_1=0$ when $r=0$. Denote $\phi_{r,p}(s)$ the solutions of (\ref{sp}) with the initial
conditions $\phi(0)=1$ and $\phi'(0)=\epsilon$, by Lemma \ref{solves} we have
$$\phi_{r,p}(s)=1+\epsilon s+\f{1}{p}\int_0^s\int_0^\tau\left(1+\frac{r}{p}\sigma^2\right)^{-\f{1}{2r}-1}\ud\sigma\ud\tau.$$

When $r\neq0$, some explicit solutions are listed below, in which $n$ is a positive integer and $\delta=\pm1$:
\begin{eqnarray*}
\phi_{-\frac{1}{2n},\frac{\delta}{2n}}(s)&=&1+\epsilon
s+2n\sum_{m=0}^{n-1}\frac{(-1)^m\delta^{m+1}C^m_{n-1}s^{2m+2}}{(2m+2)(2m+1)},\\
\phi_{\frac{1}{2n},\frac{1}{2n}}(s)&=&\epsilon
s+\frac{(2n-1)!!}{(2n-2)!!}(1+s\arctan s)\\
&&-\sum_{k=1}^{n-1}\frac{(2n-1)!!(2k-2)!!}{(2n-2)!!(2k+1)!!}\frac{1}{(1+s^2)^k},\\
\phi_{\frac{1}{2n},-\frac{1}{2n}}(s)&=&\epsilon
s+\frac{(2n-1)!!}{(2n-2)!!}\left(1+\frac{1}{2}s\ln\frac{1-s}{1+s}\right)\\
&&-\sum_{k=1}^{n-1}\frac{(2n-1)!!(2k-2)!!}{(2n-2)!!(2k+1)!!}\frac{1}{(1-s^2)^k},\\
\phi_{-\frac{1}{2n-1},-\frac{1}{2n-1}}(s)&=&\epsilon
s+\frac{(2n-1)!!}{(2n-2)!!}\left(\sqrt{1+s^2}-s\ln(s+\sqrt{1+s^2})\right)\\
&&-\sum_{k=1}^{n-1}\frac{(2n-1)!!(2k-2)!!}{(2n-2)!!(2k+1)!!}(1+s^2)^{\frac{2k+1}{2}},
\end{eqnarray*}
\begin{eqnarray*}
\phi_{-\frac{1}{2n-1},\frac{1}{2n-1}}(s)&=&\epsilon
s+\frac{(2n-1)!!}{(2n-2)!!}\left(\sqrt{1-s^2}+s\arcsin s\right)\\
&&-\sum_{k=1}^{n-1}\frac{(2n-1)!!(2k-2)!!}{(2n-2)!!(2k+1)!!}(1-s^2)^{\frac{2k+1}{2}},\\
\phi_{\frac{1}{2n-1},\frac{\delta}{2n-1}}(s)&=&\epsilon
s+\frac{(2n-2)!!}{(2n-3)!!}\frac{1+2\delta s^2}{2\sqrt{1+\delta s^2}}\\
&&-\sum_{k=2}^{n-1}\frac{(2n-2)!!(2k-3)!!}{(2n-3)!!(2k)!!}(1+\delta
s^2)^{-\frac{2k-1}{2}}~(n\geq2).
\end{eqnarray*}

When $r=0$, the solutions are given as the following power series,
\begin{eqnarray*}
\phi_{0,p}(s)&=&1+\epsilon
s+\frac{1}{p}\sum_{n=0}^{\infty}\frac{(-1)^ns^{2n+2}}{(2n+2)(2n+1)n!(2p)^n}.
\end{eqnarray*}

According to the above discussion and Theorem \ref{main}, we can obtain the following non-trivial locally projectively flat $\ab$-metrics. The corresponding data $\ba$ and $\bb$ all satisfy the conditions in Theorem \ref{main}, and they will be determined in the last section.
\begin{example}\label{ex1}
Take $k_1=2\sigma$, $k_2=0$ and $k_3=-2\sigma-1$, in order to insure $k_2\neq k_1k_3$, $\sigma$ shouldn't be equal to $0$ or $-\frac{1}{2}$, then
$$F=\left(1-\bar b^2\right)^{-\sigma-1}\sqrt{\left(1-\bar b^2\right)\ba^2+\bb^2}
\phi_{\sigma}\left(\f{\bb}{\sqrt{\left(1-\bar
b^2\right)\ba^2+\bb^2}}\right)$$
are locally projectively flat, where $\p_\sigma(s):=\phi_{-\frac{1}{2\sigma},\frac{1}{2\sigma}}(s)$ and can be expressed as (\ref{exampleonball}). In this case, $\Delta_1>0$.
\end{example}

\begin{example}\label{ex2}
Take $k_2=0,~k_1=-k_3=\pm 2$, then
$$F=e^{\bar b^2}\left\{\ba+\epsilon\bb+2\sum^\infty_{n=0}\f{(-1)^n\bb^{2n+2}}{(2n+2)(2n+1)n!\ba^{2n+1}}\right\}$$
and
$$F=e^{-\bar b^2}\left\{\ba+\epsilon\bb-2\sum^\infty_{n=0}\f{\bb^{2n+2}}{(2n+2)(2n+1)n!\ba^{2n+1}}\right\}$$
are both locally projectively flat. In this case, $\Delta_1=0$.
\end{example}

\begin{example}
Take $k_1=k_3=0,k_2=1$, then
$$F=\left(1+\bar b^4\right)^{-\f{3}{4}}\left\{\sqrt{(1+\bar b^4)\ba^2-\bar
b^2\bb^2}\left[1+\int_0^s
\int_0^\tau\f{\sigma^2}{(1+\sigma^4)^{\f{5}{4}}}{\mathrm
d}\sigma{\mathrm d}\tau\right]+\epsilon\bb\right\}$$
is locally projectively flat, where $s=\f{\bb}{\sqrt{(1+\bar
b^4)\ba^2-\bar b^2\bb^2}}$. In this case, $\Delta_1<0$.
\end{example}

Here the examples are all typical, especially when $\Delta_1\geq0$. Actually, we will see in Section \ref{non-uniqueness} that all the possible locally projectively flat $\ab$-metrics with $\Delta_1\geq0$ have been listed completely in Example \ref{ex1} and Example \ref{ex2}.

\section{Non-uniqueness of expressions for $\ab$-metrics}\label{non-uniqueness}

As we have pointed out in Section \ref{basic}, any given $\ab$-metric has infinity many different expressional forms. Thus, there are not so many $\ab$-metrics that it looks like in Theorem \ref{main}. The aim of this section is to study how many non-trivial locally projectively flat $\ab$-metrics exactly and how to judge two different metrics in forms are essentially the same one or not.

Given an $\ab$-metric $F=\pab$. Let $u$ be a constant number such that $1-ub^2>0$. Then
$\check\a:=\sqrt{\a^2-u\b^2}$ is still a
Riamannian metric. Hence, $F$ can be rewrote in terms of $\check\a$ and $\b$ as
\begin{eqnarray*}
F=\sqrt{\check\a^2+u\b^2}\phi\left(\f{\b}{\sqrt{\check\a^2+u\b^2}}\right)
=\check\a\psi(\f{\b}{\check\a}),
\end{eqnarray*}
where $\psi(s)=\sqrt{1+us^2}\phi(\f{s}{1+us^2})$. Similarly, let $\check\b:=\frac{\b}{v}$, where $v$ is a non-zero constant, then $F$ can be rewrote in terms of $\a$ and $\check\b$ as
$$F=\a\phi\left(v\f{\check\b}{\a}\right)=\a\varphi(\frac{\check\b}{\a}),$$
where $\varphi(s)=\phi(vs)$. 

Base on the above argument, we introduce two special transformations for the function $\p$:
$$g_u(\ps):=\sqrt{1+us^2}\phi(\f{s}{1+us^2}),\quad h_v(\ps):=\p(vs),$$
where $u$ and $v$ are constants with $v\neq0$. Their compositions are given by
\begin{eqnarray*}
g_{u_1}\circ g_{u_2}=g_{u_1+u_2},\quad h_{v_1}\circ h_{v_2}=h_{v_1v_2},\quad h_v\circ g_u=g_{uv^2}\circ h_v.
\end{eqnarray*}
Hence, these two kinds of transformations generate a transformation group $G$ with the above generation relationships.
\begin{definition}\label{type}
Two $\ab$-metrics $F_1=\a_1\p_1(\f{\b_1}{\a_1})$ and $F_2=\a_2\p_2(\f{\b_2}{\a_2})$ are said to be of \emph{same type} if there is a element $\pi\in G$ such that $\pi(\p_1)=\p_2$. In this case, the functions $\p_1(s)$ and $\p_2(s)$ are said to  be \emph{equivalent}. $G$ is called the \emph{representation group} of $\ab$-metrics.
\end{definition}
For example, all the functions equivalent to $1+s$ will provide Randers type metrics. Conversely, if $F=\pab$ is of Randers type, then $\p(s)$ must be equivalent to $1+s$. Actually, the functions for Randers type metrics, which are given by $\ps=\sqrt{1+us^2}+vs$, can be expressed as $\ps=g_u\circ h_v(1+s)$. Notice that all the functions are always asked to satisfy $\p(0)=1$.

Suppose that a given locally projectively flat $\ab$-metric $F=\pab$ is neither locally Minkowskian nor of Randers type, then $\ps$ must be a solution of (\ref{ode}) according to Z.Shen's result. Due to the non-uniqueness, if we rewrite the metric as $F=\check\a\psi(\f{\check\b}{\check\a})$, then the new function $\psi(s)$, which is equivalent to $\ps$, must be also a solution of (\ref{ode}) with some different parameters. This property is formulated below.
\begin{lemma}\label{change1}
Denote $\psi(s):=g_u(\p)$, where $\ps$ is the solution of (\ref{ode}) with the initial conditions $\p(0)=1$ and $\p'(0)=\epsilon$. Then
$$\left\{1+(k_1'+k_3')s^2+k_2's^4\right\}\psi''(s)=(k_1'+k_2's^2)\left\{\psi(s)-s\psi'(s)\right\},$$
where the constant $k_1'$, $k_2'$ and $k_3'$ are given by
$$k_1'=k_1+u,~k_3'=k_3+u,~k_2'=k_2+(k_1+k_3)u+u^2.$$
Moreover, $\psi(0)=1$ and $\psi'(0)=\epsilon$.
\end{lemma}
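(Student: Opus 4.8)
The plan is to route everything through the first-order reformulation from Lemma~\ref{solves} rather than differentiating $\psi$ twice by hand. Recall that $\psi=g_u(\phi)$ is the substitution induced by $\check\a=\sqrt{\a^2-u\b^2}$, so that, writing $t:=s/\sqrt{1+us^2}$, we have $\psi(s)=\sqrt{1+us^2}\,\phi(t)$ and $\ud t/\ud s=(1+us^2)^{-3/2}$. Following Lemma~\ref{solves}, I would set $f(s):=\phi(s)-s\phi'(s)$ and $\tilde f(s):=\psi(s)-s\psi'(s)$; then $f$ solves the first-order equation (\ref{f}) with coefficients $k_1,k_2,k_3$, and $\tilde f'(s)=-s\psi''(s)$, so proving the claimed second-order equation for $\psi$ is equivalent to proving that $\tilde f$ solves (\ref{f}) with the primed coefficients.

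First I would establish the clean multiplicative relation $\tilde f(s)=(1+us^2)^{-1/2}f(t)$. This is a one-line chain-rule computation: differentiating $\psi$ once and forming $\psi-s\psi'$, the $\phi(t)$-terms combine through $\sqrt{1+us^2}-us^2/\sqrt{1+us^2}=(1+us^2)^{-1/2}$ and, using $s/(1+us^2)=t/\sqrt{1+us^2}$, the $\phi'(t)$-term assembles into $-t\phi'(t)$, leaving $(1+us^2)^{-1/2}[\phi(t)-t\phi'(t)]$.

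Second, taking logarithmic derivatives and inserting (\ref{f}) for $f$ gives
\[
\frac{\tilde f'(s)}{\tilde f(s)}=\frac{f'(t)}{f(t)}\,\frac{\ud t}{\ud s}-\frac{us}{1+us^2}=\frac{-t(k_1+k_2t^2)}{1+(k_1+k_3)t^2+k_2t^4}\,(1+us^2)^{-3/2}-\frac{us}{1+us^2},
\]
and the goal is to show this equals $-s(k_1'+k_2's^2)/\{1+(k_1'+k_3')s^2+k_2's^4\}$. Substituting $t^2=s^2/(1+us^2)$ and writing $w:=1+us^2$, the denominator $1+(k_1+k_3)t^2+k_2t^4$ becomes $w^{-2}\{w^2+(k_1+k_3)s^2w+k_2s^4\}$, and expanding $w^2+(k_1+k_3)s^2w+k_2s^4$ produces exactly $1+(k_1+k_3+2u)s^2+\{k_2+(k_1+k_3)u+u^2\}s^4$; this is what forces the substitutions $k_1'=k_1+u$, $k_3'=k_3+u$, $k_2'=k_2+(k_1+k_3)u+u^2$ and identifies the denominator. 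Combining the two fractions over the common factor $w$ and matching the numerator against $-s(k_1'+k_2's^2)$ then reduces to a single coefficient check, namely $k_2'+k_1'u=k_2+2k_1u+k_3u+2u^2$, which the stated values satisfy.

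With $\tilde f$ shown to solve (\ref{f}) for the primed coefficients, I multiply by $1+(k_1'+k_3')s^2+k_2's^4$, use $\tilde f'=-s\psi''$ and $\tilde f=\psi-s\psi'$, and cancel the common factor $s$ (the identity extending to $s=0$ by continuity) to recover precisely the claimed equation for $\psi$. The initial conditions are immediate since $t=0$ and $\ud t/\ud s=1$ at $s=0$: $\psi(0)=\phi(0)=1$ and $\psi'(0)=\phi'(0)=\epsilon$. The only real obstacle is the bookkeeping in the coefficient matching above; passing through $f$ instead of computing $\psi''$ directly collapses it to the one polynomial identity and is precisely what makes the parameter relations transparent.
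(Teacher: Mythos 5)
Your proof is correct, and in fact the paper offers no proof of Lemma \ref{change1} at all: the lemma is stated as a formulation of the re-expression property, with the verification implicitly left as a direct computation. Your route is a genuinely cleaner way to organize that computation. Instead of differentiating $\psi(s)=\sqrt{1+us^2}\,\phi(t)$ twice, you pass to the first-order reduction $f=\phi-s\phi'$ from Lemma \ref{solves}, establish the covariance relation $\tilde f(s)=(1+us^2)^{-1/2}f(t)$ with $t=s/\sqrt{1+us^2}$, and push equation (\ref{f}) through it by logarithmic differentiation; the parameter law $k_1'=k_1+u$, $k_3'=k_3+u$, $k_2'=k_2+(k_1+k_3)u+u^2$ then drops out of the single expansion $(1+us^2)^2+(k_1+k_3)s^2(1+us^2)+k_2s^4=1+(k_1+k_3+2u)s^2+\{k_2+(k_1+k_3)u+u^2\}s^4$, and your remaining coefficient identity $k_2'+k_1'u=k_2+2k_1u+k_3u+2u^2$ does hold for the stated values, after which cancelling the factor $-s$ recovers the second-order equation. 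Two remarks. First, you silently (and correctly) read $g_u(\phi)(s)$ as $\sqrt{1+us^2}\,\phi\bigl(s/\sqrt{1+us^2}\bigr)$; the paper's displayed definition has $\phi\bigl(s/(1+us^2)\bigr)$, which is a typo. With that literal reading the lemma is false (test $\phi(s)=1+s$, $k_1=k_2=0$), and only the square-root version is consistent with the derivation from $\check\a=\sqrt{\a^2-u\b^2}$ and with the composition law $g_{u_1}\circ g_{u_2}=g_{u_1+u_2}$ stated in the paper. Second, dividing by $\tilde f$ requires $\tilde f\neq0$; this is harmless because Lemma \ref{solves} exhibits $f$ as an exponential, hence positive, but you could avoid the division entirely by differentiating the product relation $\tilde f(s)=(1+us^2)^{-1/2}f(t)$ directly and substituting (\ref{f}) into the result.
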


\begin{lemma}\label{change2}
Denote $\varphi(s):=h_v(\p)$, where $\ps$ is the solution of (\ref{ode}) with the initial conditions $\p(0)=1$ and $\p'(0)=\epsilon$. Then
$$\left\{1+(k_1''+k_3'')s^2+k_2''s^4\right\}\varphi''(s)=(k_1''+k_2''s^2)\left\{\varphi(s)-s\varphi'(s)\right\},$$
where the constant $k_1''$, $k_2''$ and $k_3''$ are given by
$$k_1''=v^2k_1,~k_3''=v^2k_3,~k_2''=v^4k_2.$$
Moreover, $\varphi(0)=1$ and $\varphi'(0)=v\epsilon$.
\end{lemma}

Before the further discussions, it should be pointed out that when $k_2\neq k_1k_3$, which is our biggest concern, the solutions of the equation (\ref{ode}) with the initial conditions $\p(0)=1$ and $\p'(0)=\epsilon$ are one-to-one correspondence to the quadruple data $(k_1,k_2,k_3,\epsilon)$ since the polynomial $1+(k_1+k_3)s^2+k_2s^4$ is not divisible by $k_1+k_2s^2$. Therefor, what we will do next is to determine the \emph{complete system of invariants} for the solution space
$$\Phi:=\{\ps~|~\ps~\textrm{satisfies Eqn. (\ref{ode}) and}~\p(0)=1\}$$
under the action of the representation group $G$ in terms of the quadruple data.

Define three variables depended on $\ps$:
$$\Delta_1:=(k_1+k_3)^2-4k_2,\quad\Delta_2:=4(k_1k_3-k_2),\quad\Delta_3:=k_1-k_3.$$
It is obviously that $\Delta_1-\Delta_2=\Delta_3^2$, and $\Delta_2=0$ if and only if $F=\pab$ is of Randers type.

If $\psi(s)=g_u(\phi(s))$, then by Lemma \ref{change1} $\Delta'_i=\Delta_i$ for $i=1,2,3$, where $\Delta_i'$ are determined by $k_i'$. Similarly, if $\varphi=h_v(\p(s))$, by Lemma \ref{change2} $\Delta''_i=v^4\Delta_i$ for $i=1,2$ and $\Delta''_3=v^2\Delta_3$. Hence, $\textrm{sgn}(\Delta_i)$ are invariant under the action of $G$.

Due to the sign of $\Delta_1$, the three-parameters equation (\ref{ode}) can be simplified as an one-parameter equation.

\begin{enumerate}
\item $\Delta_1>0$

In this case, $k_2$ must be non-negative.
Let $u_1$ and $u_2$ be the roots of the quadratic equation
$u^2+(k_1+k_3)u+k_2=0$. If $F=\gab$ is a {\it regular} Finsler metric, then $(1-u_1s^2)(1-u_2s^2)>0$ according to Lemma \ref{condition2}, which implicates $1-u_ib^2>0$ for $i=1,2$. So $u=u_i$ are both applicable as transformation factors. Taking anyone of them as the factor then we have $k_2'=k_2+(k_1+k_3)u+u^2=0$. That is to say, when $\Delta_1>0$, by Lemma \ref{change1} the equation (\ref{ode}) can always be reduced as
$$\left\{1+(k_1+k_3)s^2\right\}\p''(s)=k_1\{\ps-s\p'(s)\}.$$
Furthermore, in this case the roots of the equation $u^2+(k_1+k_3)u=0$ are given by $u_1=0$ and $u_2=-(k_1+k_3)$. $u_2$ must be non-zero because
\newpage
\noindent $\Delta_1>0$. Hence, if we take $u=-(k_1+k_3)$ as the transformation factor, then by Lemma \ref{change1} the above equation turns to be
$$\left\{1-(k_1+k_3)s^2\right\}\p''(s)=-k_3\{\ps-s\p'(s)\}.$$
So we can assume the coefficient of the term $s^2$ for the reduced equation is positive. Finally, by Lemma \ref{change2} the equation (\ref{ode}) can be reduced as
\begin{eqnarray}\label{D1p}
\left\{1-s^2\right\}\p''(s)=2\sigma\{\ps-s\p'(s)\},\quad\sigma\neq0,-\frac{1}{2}.
\end{eqnarray}
Here the restriction on $\sigma$ ensures $\Delta_2\neq0$.
\vspace{-5bp}
\item $\Delta_1=0$

Just like the first case above, (\ref{ode}) can be reduced such that $k_2=0$, and at the same time $k_1+k_3=0$ because $\Delta_1=0$, i.e.,
$$\p''(s)=k_1\{\ps-s\p'(s)\},$$
where $k_1\neq0$ since $\Delta_2\neq0$. So by Lemma \ref{change2} the above equation can be reduced finally as
\begin{eqnarray}\label{D1z}
\p''(s)=2\sigma\{\ps-s\p'(s)\},\quad\sigma=\pm1.
\end{eqnarray}
\vspace{-0.7cm}
\item $\Delta_1<0$

In this case, $k_2$ must be positive. There is no real root of the
quadratic equation $u^2+(k_1+k_3)u+k_2=0$. But according to Lemma \ref{condition2}, $1+k_1b^2>0$ when $F=\pab$ is a {\it regular} Finsler metric. So we can take
$u=-k_1$ as the transformation factor and hence (\ref{ode}) can be reduced as
$$\left\{1+k_3s^2+k_2s^4\right\}\p''(s)=k_2s^2\{\p(s)-s\p'(s)\}.$$
So by Lemma \ref{change2} the above equation can be reduced finally as
\begin{eqnarray}\label{D1n}
\left\{1+2\sigma s^2+s^4\right\}\p''(s)=s^2\{\ps-s\p'(s)\},\quad|\sigma|<1.
\end{eqnarray}
The restriction on $\sigma$ ensures $\Delta_1<0$.
\end{enumerate}

Suppose that $F=\pab$ is a non-trivial projectively flat $\ab$-metric on $U\subseteq\RR^n$ with $n\geq3$, where $\ps$ satisfies (\ref{ode}) with the parameters $k_1$, $k_2$ and $k_3$ and the initial condition $\p(0)=1$ and $\p'(0)=\epsilon$. Define a couple of variables $(p,q)_\p$ determined by the quadruple data $(k_1,k_2,k_3,\epsilon)$ as following:
\vspace{-2bp}
$$(p,q)_\p:=\left(\f{\sqrt{\Delta_2}}{\Delta_3},\f{\epsilon^4}{\Delta_2}\right).$$
Some special case are defined below:
\vspace{-5bp}
\begin{itemize}
\item $p:=0$ when $\Delta_2=0$;
\vspace{-4bp}
\item $p:=\infty$ when $\Delta_2>0$ and $\Delta_3=0$;
\vspace{-4bp}
\item $p:=i\infty$ when $\Delta_2<0$ and $\Delta_3=0$;
\vspace{-4bp}
\item $q:=0$ when $\epsilon=0$;
\vspace{-4bp}
\item $q:=\infty$ when $\epsilon\neq0$ and $\Delta_2=0$.
\end{itemize}
It is easy to see that $(0,0)_\phi$ and $(0,\infty)_\phi$ correspond to Riemannian metrics and non-Riemann Randers metrics respectively.

By Lemma \ref{change1} and \ref{change2} we can see that these variables are invariable under
the action of $G$, i.e., $(p,q)_{\pi(\p)}=(p,q)_{\p}$ for any $\pi\in G$. The values of $p$ (for the
reduced equations), which is the key invariant, are listed in Table \ref{table}.
\newpage

\begin{table}[htbp]
  \centering
  \caption{}\label{table}
  \begin{tabular}{|c|c|c|c|c|c|}
  \hline
                  &$k_1$    &$k_2$& $k_3$      & $p$ & range of $p$ \\
  \hline
  Eqn. (\ref{D1p})&$2\sigma$& $0$ &$-2\sigma-1$&$\frac{2\sqrt{-2\sigma(2\sigma+1)}}{4\sigma+1}$&$\RR\backslash\{0\}\cup\{\infty\}\cup i(-1,1)$\\
  \hline
  Eqn. (\ref{D1z})&$2\sigma$& $0$ &$-2\sigma$  &$\frac{\sqrt{-\sigma^2}}{\sigma}$&$\{\pm i\}$ \\
  \hline
  Eqn. (\ref{D1n})&$0$      & $1$ &$2\sigma$   &$-\frac{i}{\sigma}$&$i(-\infty,-1)\cup i(1,+\infty)\cup \{i\infty\}$ \\
  \hline
\end{tabular}
\end{table}

there is no intersection between the values of $p$ for different class of reduced equations, Moreover, for the same class of reduced equations, different parameters will correspond to different values of $p$. So the reduced equations are one-to-one correspondence to the variable $p$, the range of which is $\RR\backslash\{0\}\cup\{\infty\}\cup i\RR\cup\{i\infty\}$. Combining with Lemma \ref{change2} we have
\begin{proposition}\label{IV}
When dimension $n\geq3$, two non-trivial locally projectively flat $\ab$-metrics $F_i=\a_i\p_i(\frac{\b_i}{\a_i})~(i=1,2)$ are of the same type if and only if $(p,q)_{\p_1}=(p,q)_{\p_2}$.
\end{proposition}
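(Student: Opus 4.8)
The plan is to show that $(p,q)_\p$ is a complete system of invariants for the action of the representation group $G$ on the solution space $\Phi$, proving the two implications separately. The substance lies in the non-Randers range $\Delta_2\neq0$, where the stated bijection between solutions of (\ref{ode}) with $\p(0)=1$ and quadruples $(k_1,k_2,k_3,\epsilon)$ is available; the degenerate locus $\Delta_2=0$ is immediate from the special definitions, since there $p=0$ and $q$ only separates the Riemannian case ($q=0$) from the single type of genuine Randers metrics ($q=\infty$), all of the latter being equivalent to $1+s$.

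For the ``only if'' direction, suppose $\pi(\p_1)=\p_2$ with $\pi\in G$. Since $g_u$ and $h_v$ generate $G$, it is enough to check that $(p,q)_\p$ is fixed by each generator. By Lemma \ref{change1} the substitution $g_u$ leaves $\Delta_2=4(k_1k_3-k_2)$, $\Delta_3=k_1-k_3$ and $\epsilon$ all unchanged, so it fixes $p=\sqrt{\Delta_2}/\Delta_3$ and $q=\epsilon^4/\Delta_2$; by Lemma \ref{change2} the substitution $h_v$ sends $(\Delta_2,\Delta_3,\epsilon)\mapsto(v^4\Delta_2,v^2\Delta_3,v\epsilon)$, under which $p$ and $q$ are again invariant. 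Hence $(p,q)_{\p_1}=(p,q)_{\p_2}$.

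For the ``if'' direction, assume $(p,q)_{\p_1}=(p,q)_{\p_2}$. First I would apply to each $\p_i$ the explicit sequence of $g_u$'s and $h_v$'s described before Table \ref{table}, giving $\pi_i\in G$ that carries $\p_i$ to a solution of one of the canonical equations (\ref{D1p}), (\ref{D1z}), (\ref{D1n}). By the one-to-one correspondence between these reduced equations and the value of $p$ recorded in Table \ref{table}, the hypothesis $p_{\p_1}=p_{\p_2}$ forces $\pi_1(\p_1)$ and $\pi_2(\p_2)$ to satisfy the \emph{same} reduced equation, say with parameter $\sigma$; let $\epsilon_i^{\mathrm{red}}$ be the resulting initial slopes. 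Reading $\Delta_2^{\mathrm{red}}=4(k_1k_3-k_2)$ off the reduced parameters in Table \ref{table} shows it is a function of $\sigma$ alone, hence of $p$ alone. Invariance of $q$ therefore yields $(\epsilon_i^{\mathrm{red}})^4=q\,\Delta_2^{\mathrm{red}}$ for both $i$, and since $q_{\p_1}=q_{\p_2}$ and the slopes are real, $\epsilon_1^{\mathrm{red}}=\pm\,\epsilon_2^{\mathrm{red}}$.

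To absorb the remaining sign I use that $h_{-1}$ fixes every reduced equation: under $h_{-1}$ one has $v^2=v^4=1$, so $(k_1,k_2,k_3)$ is preserved while $\epsilon\mapsto-\epsilon$. If $\epsilon_1^{\mathrm{red}}=\epsilon_2^{\mathrm{red}}$ the bijection on the common reduced equation gives $\pi_1(\p_1)=\pi_2(\p_2)$, so $\pi:=\pi_2^{-1}\pi_1$ satisfies $\pi(\p_1)=\p_2$; if $\epsilon_1^{\mathrm{red}}=-\epsilon_2^{\mathrm{red}}$, the same bijection gives $h_{-1}\pi_1(\p_1)=\pi_2(\p_2)$, so $\pi:=\pi_2^{-1}h_{-1}\pi_1$ works. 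In both cases $\p_1$ and $\p_2$ are equivalent and $F_1,F_2$ are of the same type in the sense of Definition \ref{type}. I expect the main difficulty to lie in this sufficiency half: one must confirm that the normalization before Table \ref{table} is genuinely realized by elements of $G$ rather than by a formal rewriting of the equation, track $\epsilon$ faithfully through every step, and verify that once $p$ and $\Delta_2^{\mathrm{red}}$ are fixed the only residual freedom is the sign flip $h_{-1}$ — that is, that no other composition of generators stabilizes a reduced equation while acting nontrivially on $\epsilon$.
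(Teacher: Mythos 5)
Your proof is correct and takes essentially the same route as the paper: the paper's (implicit) argument is exactly the $G$-invariance of $(p,q)$ from Lemmas \ref{change1} and \ref{change2} for necessity, together with the reduction to the canonical equations (\ref{D1p})--(\ref{D1n}), the one-to-one correspondence with $p$ recorded in Table \ref{table}, uniqueness of solutions of the (linear) ODE with $\p(0)=1,\ \p'(0)=\epsilon$, and the residual sign flip $h_{-1}$ to absorb $\epsilon\mapsto-\epsilon$. Your closing worry is largely unnecessary: since necessity is proved by generator invariance and sufficiency constructs $\pi$ explicitly, no classification of which elements of $G$ stabilize a reduced equation is needed; only the admissibility of the reduction factors (which the paper secures via Lemma \ref{condition2}) must be checked.
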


The about result indicates that $(p,q)_\p$ are the complete system of invariants we need. As a result, we immediately have the following
\begin{theorem}[Classification]\label{projab}
Let $F=\pab$ be an $\ab$-metric on a $n$-dimensional manifold $M$ with $n\geq3$. Then $F$ is locally projectively flat if and only if $F$ lies in one of the following cases:
\begin{enumerate}
\item $\a$ is projectively flat and $\b$ is parallel with respect to $\a$. In this case, $F$ is either a Riemannian metric with constant sectional curvature or a locally Minkowski metric;
\item $F$ is a locally projectively flat Randers metric;
\item On the open subset $\mathcal U$ of $M$ where $\ud b\neq0$ everywhere or $\ud b\equiv0$, $F$ can be reexpressed (if necessary) still as the form $F=\pab$ such that one of the following holds
\begin{enumerate}
\item $\ps$ satisfies Eqn. (\ref{D1p}), $\a$ and $\b$ are determined by
\begin{eqnarray*}
\a&=&(1-\bar
b^2)^{-\sigma-\frac{1}{2}}\sqrt{\ba^2+(1-\bar
b^2)^{-1}\bb^2},\\
\b&=&(1-\bar b^2)^{-\sigma-1}\bb;
\end{eqnarray*}
\item $\ps$ satisfies Eqn. (\ref{D1z}), $\a$ and $\b$ are determined by
\begin{eqnarray*}
\a=e^{\sigma\bar b^2}\ba,\qquad\b=e^{\sigma\bar b^2}\bb;
\end{eqnarray*}
\item $\ps$ satisfies Eqn. (\ref{D1n}), $\a$ and $\b$ are determined by
\begin{eqnarray*}
\a&=&\frac{\exp\left(-\frac{\sigma}{2\sqrt{1-\sigma^2}}\arctan\frac{\sigma+\bar
b^2}{\sqrt{1-\sigma^2}}\right)}
{(1+2\sigma\bar b^2+\bar b^4)^\frac{1}{4}}\sqrt{\ba^2-\frac{(2\sigma+\bar b^2)}{1+2\sigma\bar b^2+\bar b^4}\bb^2},\\
\b&=&\frac{\exp\left(-\frac{\sigma}{2\sqrt{1-\sigma^2}}\arctan\frac{\sigma+\bar
b^2}{\sqrt{1-\sigma^2}}\right)} {\left(1+2\sigma\bar b^2+\bar
b^4\right)^\frac{3}{4}}\bb.
\end{eqnarray*}
\end{enumerate}
In the three case above, $\ba$ is a locally projectively flat Riemannian metric, $\bb$ is a closed 1-form which is conformal with respect to $\ba$, $\bar b:=\|\bb\|_{\ba}$.
\end{enumerate}
\end{theorem}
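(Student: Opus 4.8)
The plan is to assemble the statement from three ingredients already established: Z.~Shen's dichotomy for projectively flat $\ab$-metrics, the normalization of the defining equation (\ref{ode}) under the representation group $G$ carried out in Section \ref{non-uniqueness}, and the explicit $\b$-deformation formulas of Theorem \ref{main}. The theorem is essentially a packaging of these, so the work is organizing the case division rather than new computation.

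First I would dispose of the degenerate cases. If $\b$ is parallel with respect to $\a$ and $\a$ is projectively flat, then (\ref{GF}) gives $G^i=\G=Py^i$ in the adapted coordinates, so $F$ is projectively flat; this is case (1), and as recalled in Section \ref{basic} such an $F$ is a constant--sectional--curvature Riemannian metric when $\b=0$ and a locally Minkowski metric when $\b\neq0$. If instead $F$ is of Randers type, it is case (2), governed by the classical result of \cite{BM}. These two cases are exactly the situations excluded by conditions (a) and (b) of Z.~Shen's theorem.

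For the necessity in the remaining situation, suppose $F$ is locally projectively flat, neither trivial nor of Randers type. Restricting to the open subset $\mathcal U$ on which $\ud b\equiv0$ or $\ud b\neq0$ everywhere, conditions (a)--(c) of Z.~Shen's result hold, so $\ps$ solves (\ref{ode}) with $k_2\neq k_1k_3$, and $\a,\b$ satisfy (\ref{Gi}) and (\ref{bij}). By Lemmas \ref{change1} and \ref{change2} the sign of $\Delta_1=(k_1+k_3)^2-4k_2$ is invariant under $G$, and Section \ref{non-uniqueness} shows that by applying a suitable element of $G$ --- that is, by reexpressing $F=\check\a\,\psi(\check\b/\check\a)$ --- one may normalize $\ps$ to satisfy exactly one of the reduced equations (\ref{D1p}), (\ref{D1z}), (\ref{D1n}) according as $\Delta_1>0$, $=0$, or $<0$. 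It then remains to feed the corresponding parameter triples $(k_1,k_2,k_3)=(2\sigma,0,-2\sigma-1)$, $(2\sigma,0,-2\sigma)$, and $(0,1,2\sigma)$ into Theorem \ref{main} and evaluate the deformation factor $\eta(\bar b^2)$ in each case. A direct specialization of the five explicit forms of $\eta$ reproduces precisely the three displayed pairs of formulas in case (3), up to the harmless constant rescaling of $\ba$ and $\bb$ arising from the difference between the normalization $\eta(0)=1$ used in Theorem \ref{main} and the constant appearing in item 3(c); since a constant multiple of a projectively flat Riemannian metric is still projectively flat and a constant multiple of a closed conformal $1$-form is still closed and conformal, the geometric conclusions are unaffected.

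Finally, sufficiency holds term by term: cases (1) and (2) as above, and case (3) because, as noted immediately after Theorem \ref{main}, once $\a,\b,\ps$ are produced by the stated $\b$-deformation of a projectively flat $\ba$ and a closed conformal $\bb$, the spray coefficients of $F$ reduce to $G^i=Py^i$ in every dimension. I expect the main obstacle to be bookkeeping rather than conceptual: one must verify that the subset $\mathcal U$ on which $\ud b$ is regular exhausts $M$ up to a locus already covered by another case, and that the normalization by $G$ is genuinely compatible with the $\b$-deformation, so that the reexpressed data $(\ba,\bb)$ really inherit projective flatness and conformality. The invariance $\bar b^2=b^2$ established before Theorem \ref{main}, together with Proposition \ref{IV}, is precisely what guarantees this compatibility and the well-definedness of the three subcases.
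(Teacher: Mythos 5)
Your proposal is correct and follows essentially the same route as the paper: the classification is obtained by combining Z.~Shen's characterization, the reduction of Eqn.~(\ref{ode}) to the three normal forms (\ref{D1p}), (\ref{D1z}), (\ref{D1n}) under the representation group $G$ according to the sign of $\Delta_1$, and then specializing the deformation formulas of Theorem \ref{main} to the parameter triples $(2\sigma,0,-2\sigma-1)$, $(2\sigma,0,-2\sigma)$, $(0,1,2\sigma)$. Your remark that case 3(c) differs from the output of Theorem \ref{main} by a harmless constant factor (absorbable by rescaling $\ba$ and $\bb$ without changing $\bar b$) is a correct detail that the paper passes over silently.
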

\newpage

\section{Reversible projectively flat $\ab$-metrics}\label{s9}
By Lemma \ref{solves} we known that the solutions of (\ref{ode}) have a distinctive feature. Because $f(s)$ is an even function, except for the term $\epsilon s$, the rest part of $\ps$ is even too. This observation leads to the following result.
\begin{theorem}\label{pr}
Let $F=\pab$ be a non-trivial projectively flat $\ab$-metric on an open subset $U\subseteq\RR^n$ with $n\geq3$. Then exist a closed $1$-form $\theta=\theta_i(x)y^i$ on $U$ such that $F+\theta$ is a reversible projectively flat $\ab$-metric.
\end{theorem}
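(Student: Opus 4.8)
The plan is to exploit the parity structure of the solutions of Eqn.~(\ref{ode}) recorded in Lemma \ref{solves}. There the only part of $\ps$ that fails to be even in $s$ is the linear term $\epsilon s$: setting $\bar\p(s):=1+\int_0^s\int_0^\tau\f{k_1+k_2\sigma^2}{1+(k_1+k_3)\sigma^2+k_2\sigma^4}f(\sigma)\,\ud\sigma\ud\tau$, the function $f$ is even by Lemma \ref{solves}, hence so is the integrand; its inner primitive is odd, so the double integral is even, and therefore $\bar\p$ is even with $\ps=\bar\p(s)+\epsilon s$. Translating this back to the metric gives $F=\a\bar\p(\f{\b}{\a})+\epsilon\b=\bar F+\epsilon\b$, where $\bar F:=\a\bar\p(\f{\b}{\a})$. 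This immediately suggests taking $\theta:=-\epsilon\b$, so that $F+\theta=\bar F$.

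First I would check that $\theta$ is a closed $1$-form. Since $F$ is a non-trivial projectively flat $\ab$-metric with $n\geq3$, Z.~Shen's result supplies (\ref{bij}), whose right-hand side $2\tau\{(1+k_1b^2)\aij+(k_3+k_2b^2)\bi\bj\}$ is symmetric in $i,j$; therefore $\sij=0$ and $\b$ is closed. As a constant multiple of $\b$, the form $\theta=-\epsilon\b$ is then closed as well.

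Next I would verify that $\bar F$ is a genuine reversible $\ab$-metric. Reversibility is immediate from the evenness of $\bar\p$. For the Finsler conditions, two observations suffice. Since $\bar\p$ and $\p$ differ only by the linear term $\epsilon s$, one has $\bar\p-s\bar\p'+(b^2-s^2)\bar\p''=\p-s\p'+(b^2-s^2)\p''$, so the convexity inequality (\ref{conditionofphi}) holds for $\bar\p$ precisely because it holds for $\p$. Moreover the evenness of $\bar\p$ yields the averaging identity $\bar\p(s)=\tfrac12\big(\p(s)+\p(-s)\big)$, and since $\p>0$ on the symmetric interval $(-b_o,b_o)$ this gives $\bar\p>0$ there. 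Hence $\bar F$ is an $\ab$-metric. Finally, projective flatness of $\bar F$ follows from Rapcs\'ak's theorem: $\theta$ being closed, $F$ and $F+\theta=\bar F$ are pointwise projectively related, so in a coordinate system in which the geodesics of $F$ are straight lines those of $\bar F$ coincide with them as point sets and are straight as well.

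There is no single heavy computation here; the crux is really the verification that $\bar F$ survives as a bona fide Finsler metric, since that is what allows Rapcs\'ak's theorem to be invoked. The two points to get right are that adding a linear function of $s$ leaves the quantity in (\ref{conditionofphi}) unchanged, so strong convexity is preserved, and that positivity is recovered not by a direct estimate but through the averaging identity $\bar\p=\tfrac12(\p(s)+\p(-s))$. The remaining ingredients, namely the parity of $\ps$, the closedness of $\b$, and the transfer of projective flatness along a closed $1$-form, are all direct appeals to results already established.
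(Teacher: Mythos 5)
Your argument on the non-Randers part of the statement is correct and is essentially the paper's own proof: the paper also takes $\theta=-\p'(0)\b$, observes that $\tilde\p(s):=\ps-\p'(0)s$ is even and still solves (\ref{ode}), notes that subtracting the linear term leaves the quantity in (\ref{conditionofphi}) unchanged, and recovers positivity from $\tilde\p(s)\geq\min\{\ps,\p(-s)\}$, which is just a weaker form of your averaging identity $\tilde\p(s)=\f{1}{2}\{\ps+\p(-s)\}$. Your explicit appeals to the symmetry of (\ref{bij}) for closedness of $\theta$ and to Rapcs\'ak's theorem for projective flatness make explicit two steps the paper leaves implicit, which is fine.

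The gap is the Randers-type case. A non-trivial projectively flat $\ab$-metric can perfectly well be of Randers type ($\a_1$ projectively flat, $\b_1$ closed but not parallel), and the theorem covers these metrics. For such $F$ your two key citations are unavailable: Z.~Shen's result carries the hypothesis (a) that $F$ is \emph{not} of Randers type, so it does not supply (\ref{bij}) — and indeed a projectively flat Randers metric satisfies no constraint of the shape (\ref{bij}); the only restriction on its $1$-form is $\sij=0$, with $\rij$ completely arbitrary. For the same reason, nothing forces $\ps$ into the parity framework of Lemma \ref{solves} via Shen's theorem in that case. So, as written, your derivation of the closedness of $\theta$ (and of the even-plus-linear decomposition) is unjustified on exactly this subclass. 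The repair is cheap, and is the first line of the paper's proof: for Randers type, write $F=\a_1+\b_1$; projective flatness forces $\a_1$ projectively flat and $\b_1$ closed (the B\'acs\'o--Matsumoto characterization quoted in Section \ref{basic}), so $\theta=-\b_1$ already does the job. Your argument then applies verbatim to the complementary, non-Randers case, where Shen's result genuinely applies.
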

\begin{proof}[Proof of Theorem \ref{pr}]
Recall that for a Randers metric $F=\a+\b$, $F$ is projectively flat if and only if $\a$ is projectively flat and $\b$ is closed, so it is obviously true for Randers metrics.

If $F=\pab$ isn't of Randers type. then $\ps$ must satisfy (\ref{ode}), and $\tilde\phi(s):=\ps-\p'(0)s$, which is still a solution of (\ref{ode}), is an even function. Since $F$ is a Finsler metric, $\ps$ satisfies $\phi(s)>0$ and (\ref{conditionofphi}) when $|s|\leq b<b_o$. It is easy to verify that $\tilde\phi(s)-s\tilde\phi'(s)=\ps-s\p'(s)$ and $\tilde\phi''(s)=\p''(s)$, so $\tilde\p(s)$ satisfies (\ref{conditionofphi}) too. On the other hand, $\tilde\p(s)$ must be positive when $|s|<b_o$ since
$$\tilde\p(s)\geq\min\{\tilde\p(s)-\p'(0)s,\tilde\p(s)+\p'(0)s\}=\min\{\ps,\p(-s)\}.$$
So $\tilde F=\a\tilde\p(\f{\b}{\a})$, namely $F+\theta$ where $\theta=-\p'(0)\b$, is a reversible projectively flat $\ab$-metric.
\end{proof}

Recall that Rapcs\'{a}k's result tells us that if $F$ is a projectively flat Finsler metric, then for any closed $1$-form $\theta$, $F+\theta$ is projectively flat too as long as it is still a Finsler metric. Hence, Theorem \ref{pr} implies in a sense that there isn't any non-trivial irreversible projectively flat $\ab$-metrics. But it doesn't mean that the irreversible projectively flat $\ab$-metrics are utterly useless. For instance, Berwald's metric (\ref{BerwaldF}), which has a great property to be of constant flag curvature, is irreversible. But for the corresponding reversible metric $F=\a+\frac{\b^2}{\a}$, it couldn't be of constant flag curvature except for the trivial case.

At the end of this section, let's talk about the quantity of non-trivial reversible projectively flat $\ab$-metrics.

It is obviously that $F=\pab$ is a non-trivial reversible projectively flat $\ab$-metric if and only if $q_\p=\frac{\epsilon^4}{\Delta_2}=0$. Therefor, $p_\p=\frac{\sqrt{\Delta_2}}{\Delta_3}$ is the unique \emph{invariant} to distinguish different type of reversible metrics, and its range is $\RR\cup\{\infty\}\cup i\RR\cup\{i\infty\}$. Notice that $p=0$ here represents Riemannian metrics, which are also one type of non-trivial reversible metrics.
\begin{theorem}
There are a total of one-parameter types of non-trivial reversible locally projectively flat $\ab$-metrics when dimension $n\geq3$.
\end{theorem}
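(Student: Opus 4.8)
The plan is to reduce the enumeration of reversible types entirely to the single invariant $p_\p=\sqrt{\Delta_2}/\Delta_3$, using the complete system of invariants $(p,q)_\p$ already established in Proposition \ref{IV}. The entry point is the reversibility criterion recorded just above the statement: a non-trivial $\ab$-metric $F=\pab$ is reversible exactly when $\ps$ is even, and by the solution formula of Lemma \ref{solves} the only odd contribution to $\ps$ is the term $\epsilon s$ (the function $f$ there is even, so the double integral is even). Hence $F$ is reversible if and only if $\epsilon=\p'(0)=0$. Since the metric is non-trivial and not of Randers type we have $\Delta_2\neq0$, whence this is the same as $q_\p=\epsilon^4/\Delta_2=0$.

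With this in hand, I would invoke Proposition \ref{IV}: in dimension $n\geq3$, two non-trivial locally projectively flat $\ab$-metrics are of the same type if and only if their pairs $(p,q)_\p$ agree. Every reversible metric has $q_\p=0$, so the second coordinate is pinned across the whole reversible class; consequently two reversible metrics are of the same type precisely when they share the value of $p_\p$. Thus $p_\p$ is a complete invariant for reversible types, and counting the types amounts to describing the range of $p_\p$. Because the reduced equations (\ref{D1p}), (\ref{D1z}) and (\ref{D1n}) are governed by $\sigma$ alone, which controls $p_\p$, and do not involve $\epsilon$, imposing $\epsilon=0$ leaves the full $\sigma$-range intact. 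Reading the last column of Table \ref{table} and taking the union of the three admissible ranges, then adjoining the Riemannian point $p_\p=0$, recovers exactly $\RR\cup\{\infty\}\cup i\RR\cup\{i\infty\}$, and the disjointness of the $p$-values across the three classes (noted before the statement) makes the correspondence between reversible types and points of this set bijective.

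Finally I would read off the conclusion from the shape of the parameter set. Each of $\RR\cup\{\infty\}$ and $i\RR\cup\{i\infty\}$ is a one-real-parameter loop, the two meeting only at $p_\p=0$ (the Riemannian type), so the whole set is one-dimensional; this is precisely the assertion that the non-trivial reversible locally projectively flat $\ab$-metrics fall into a one-parameter family of types. The one genuinely delicate step is the bookkeeping in the second paragraph, namely checking that the three ranges in Table \ref{table} glue together into $\RR\cup\{\infty\}\cup i\RR\cup\{i\infty\}$ with no overlaps and no omitted values, so that $p_\p$ is both injective and surjective onto this set; the rest is a direct application of Proposition \ref{IV} together with the criterion $\epsilon=0$.
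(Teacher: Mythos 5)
Your proposal is correct and follows essentially the same route as the paper: reversibility is reduced to $\epsilon=0$ (hence $q_\p=0$) via the evenness of $f$ in Lemma \ref{solves}, Proposition \ref{IV} then pins the type to the single invariant $p_\p$, and the range $\RR\cup\{\infty\}\cup i\RR\cup\{i\infty\}$ read from Table \ref{table} (with $p_\p=0$ for the Riemannian type) gives the one-parameter family. Your second paragraph merely makes explicit the bookkeeping that the paper leaves implicit when it asserts this range, so there is no substantive difference in approach.
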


We can use a equivalent variable
$$\left(\f{2\sqrt{|\Delta_2|}\Delta_3}{|\Delta_2|+\Delta_3^2},
\f{2\Delta_3}{|\Delta_2|+\Delta_3^2}\right)_\p$$
to replace $p$ as the invariant. The points become two circles $x^2+(y\pm1)^2=1$ in plane, and every different point corresponds to a different type of metrics. In particular, the origin represents Riemannian metrics. More information are shown in Figure \ref{tu}.
\begin{figure}[htbp]
  \includegraphics[width=0.80\textwidth]{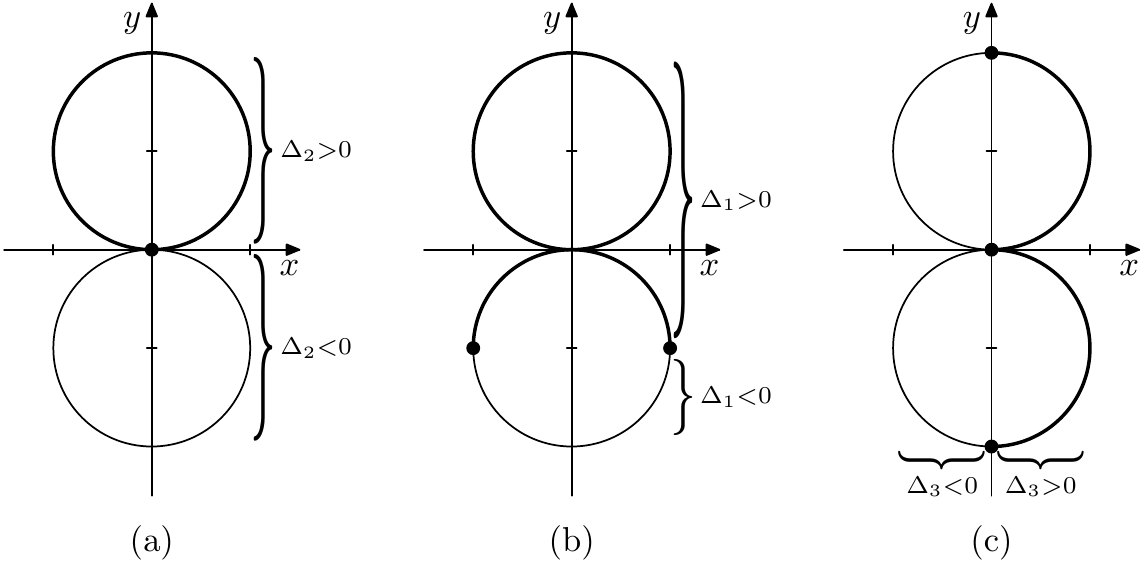}\\
  \caption{}\label{tu}
\end{figure}

\section{Complete list of allowable $1$-forms}\label{7}
One can see that the closed and conformal $1$-forms with respect to the constant curvature Riemannian metrics play an important role here. Our aim in this section is to derive their explicit expression.

Let $h$ be the Riemannian metric with constant sectional curvature $\mu$. $h$ can be expressed as
\begin{eqnarray}\label{h}
h=\f{\sqrt{(1+\mu|x|^2)|y|^2-\mu\langle x,y\rangle^2)}}{1+\mu|x|^2}.
\end{eqnarray}
Under the coordinate system using here, all the geodesics of $h$ are straight lines.

According to \cite{SX}, when dimension $n\geq3$, the conformal vector fields with respect to $h$ are given by
\begin{eqnarray}\label{W3}
\quad W=\left(\lambda\sqrt{1+\mu|x|^2}+\langle a,x\rangle\right)x-\f{|x|^2a}{\sqrt{1+\mu|x|^2}+1}+qx+b+\mu\langle b,x\rangle x
\end{eqnarray}
where $\lambda$ is a constant, $q=(q_{ij})\in so(n)$ is an anti-symmetric
matrix and $a,b\in\RR^n$ are constant vectors. When dimension $n=2$, they are given by
\begin{eqnarray}
\quad W=\left\{(1+\mu x_1^2)P+\mu x_1x_2Q\right\}\pp{}{x^1}+\left\{(1+\mu x_2^2)Q+\mu x_1x_2P\right\}\pp{}{x^2}\label{W2}
\end{eqnarray}
where $x:=(x_1,x_2)$ denote the coordinate of points, the functions $P(x_1,x_2)$ and $Q(x_1,x_2)$ are determined by two equations as following:
\begin{eqnarray}
&\displaystyle \pp{P}{x_2}+\pp{Q}{x_1}=-\mu x_1x_2\left\{\f{1}{1+\mu x_2^2}\pp{P}{x_1}+\f{1}{1+\mu x_1^2}\pp{Q}{x_2}\right\},\label{PQ1}\\
&\displaystyle \f{1}{1+\mu x_2^2}\pp{P}{x_1}-\f{1}{1+\mu x_1^2}\pp{Q}{x_2}=0.\label{PQ2}
\end{eqnarray}

By the above facts, we will show that the vector fields $W$ are simple and can be expressed in an unified form if adding the condition $\ud W^\flat=0$.

\begin{lemma}\label{cc}
Let $h$ be a locally projectively flat Riemannian metric and $W$ be
a conformal vector field with respect to $h$. If the dual 1-form $W^\flat$ of $W$ with respect of $h$ is closed, then there is a local coordinate system
in which $h$ is given by (\ref{h}) and
\begin{eqnarray*}
W=\sqrt{1+\mu|x|^2}(\lambda x+a),
\end{eqnarray*}
where $\mu,~\lambda$ are constants and $a\in\RR^n$ is a constant vector. In this case,
\begin{eqnarray*}
W^\flat=\f{\lambda\langle x,y\rangle+(1+\mu|x|^2)\langle
a,y\rangle-\mu\langle a,x\rangle\langle
x,y\rangle}{(1+\mu|x|^2)^\f{3}{2}}.
\end{eqnarray*}
\end{lemma}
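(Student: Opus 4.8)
The plan is to reduce the two hypotheses to a single structural condition and then read off $W$ from the explicit list of conformal fields. Since $h$ is locally projectively flat and Riemannian, Beltrami's theorem forces $h$ to have constant sectional curvature, say $\mu$, so I may fix coordinates in which $h$ is given by (\ref{h}), with straight-line geodesics and components $h_{ij}=\f{(1+\mu\xx)\delta_{ij}-\mu x_ix_j}{(1+\mu\xx)^2}$. Writing $W_j:=h_{ij}W^i$, conformality of $W$ means $\nabla_iW_j+\nabla_jW_i=2c\,h_{ij}$ for some function $c$, whereas $\ud W^\flat=0$ means $\nabla_iW_j=\nabla_jW_i$; together they give $\nabla_iW_j=c\,h_{ij}$, i.e. $W^\flat$ is a closed conformal (concircular) $1$-form. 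This is the structural fact that will make $W$ collapse to the claimed shape.

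For $n\geq3$ I would substitute the general conformal field (\ref{W3}) into $W^\flat=W_jy^j$ and compute the antisymmetric part of $\pp{W_j}{x^k}$. Writing $r:=\sqrt{1+\mu\xx}$, the four blocks of (\ref{W3}) behave differently: the $\lambda$-block $\lambda r\,x$ has closed dual, the rotation $qx$ and the block $b+\mu\langle b,x\rangle x$ are Killing, and the $a$-block contributes a term proportional to $a_kx_j-a_jx_k$ to $\ud W^\flat$. Evaluating $\ud W^\flat$ at $x=0$ leaves only the rotation and forces $q=0$; the surviving identity then combines the $a$- and $b$-contributions into $(a_k+\mu b_k)x_j-(a_j+\mu b_j)x_k$, whose vanishing for all $x$ forces $a+\mu b=0$. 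Substituting $b=-\tfrac{1}{\mu}a$ back into (\ref{W3}) cancels the two $\langle a,x\rangle x$ terms, and the elementary identity $\f{\xx}{r+1}+\f{1}{\mu}=\f{r}{\mu}$ collapses the remainder to $W=r\bigl(\lambda x-\tfrac{1}{\mu}a\bigr)$, which is the asserted $W=\sqrt{1+\mu\xx}\,(\lambda x+a)$ after renaming the constant vector. (The flat case $\mu=0$ gives $a=0$ directly, and $W=\lambda x+b$ is again of this shape.) A direct evaluation of $W_j=h_{ij}W^i$ for this field then produces the closed expression for $W^\flat$ stated in the lemma.

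The case $n=2$ is where I expect the main difficulty, since there $W$ is prescribed only implicitly through the functions $P,Q$ constrained by (\ref{PQ1})--(\ref{PQ2}). Two routes are available. One is to impose $\ud W^\flat=0$ as an extra pair of first-order equations and solve the resulting overdetermined system for $P,Q$ directly. The cleaner route is dimensional: on a space form the closed conformal $1$-forms span a space of dimension exactly $n+1$, and the family $W=\sqrt{1+\mu\xx}\,(\lambda x+a)$ already furnishes $n+1$ independent fields (one from $\lambda$ and $n$ from $a\in\RR^n$), each verified by a one-line computation to be conformal with closed dual; hence it must exhaust all such fields in every dimension, $n=2$ included. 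The real obstacle is therefore the separation of the closed-conformal part from the Killing part of $W$---easy to state but resting on the cancellations above---rather than any deep phenomenon, and the only genuinely separate labor is reconciling the implicit two-dimensional description with the uniform answer.
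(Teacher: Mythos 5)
Your proposal is correct, and for $n\geq3$ it is essentially the paper's own argument: both substitute the classification (\ref{W3}) into $\ud W^\flat$ and kill the Killing part. The paper organizes this by first rewriting (\ref{W3}) (for $\mu\neq0$) as $W=\sqrt{1+\mu|x|^2}(\lambda x+a)+qx+b+\mu\langle b,x\rangle x$ and then showing $\ud W^\flat=0$ forces $b=0$, $q=0$; you keep the original parameters and extract the equivalent constraints $q=0$ and $a+\mu b=0$, which is the same computation up to the identity $\f{|x|^2}{\sqrt{1+\mu|x|^2}+1}=\f{\sqrt{1+\mu|x|^2}-1}{\mu}$ that underlies both reorganizations. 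The genuine divergence is at $n=2$. The paper works directly with the implicit description (\ref{W2}), adjoins the closedness equation (\ref{PQ3}) to (\ref{PQ1})--(\ref{PQ2}), and shows by brute-force differentiation that the auxiliary functions $\tilde P,\tilde Q$ (which are exactly the components of $W/\sqrt{1+\mu|x|^2}$) satisfy $\pp{\tilde P}{x_1}=\pp{\tilde Q}{x_2}$ and $\pp{\tilde P}{x_2}=\pp{\tilde Q}{x_1}=0$, whence the result. You instead argue softly: conformality plus closedness give $\nabla_iW_j=c\,h_{ij}$, and on a space form such concircular fields form a linear space of dimension at most $n+1$, which your explicit $(n+1)$-parameter family exhausts. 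This is cleaner and uniform in $n$, but it rests on two points you assert rather than prove: (i) the bound $\dim\leq n+1$, which requires the prolongation argument (from $\nabla_iW_j=c\,h_{ij}$ and constant curvature $\mu$ one derives $\nabla_jc=-\mu W_j$, so the pair $(W,c)$ solves a first-order linear system and is determined by its value at one point); and (ii) the verification that each member of the family $\sqrt{1+\mu|x|^2}(\lambda x+a)$ is closed \emph{and} conformal in dimension two as well---you cannot quote (\ref{W3}) for this, since that classification is stated only for $n\geq3$, so the ``one-line computation'' must actually be carried out. With those two points filled in, your route is complete and arguably more conceptual than the paper's two-dimensional computation, at the price of importing a standard but nontrivial rigidity fact.
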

\begin{proof}
\begin{enumerate}
\item When $n\geq3$,
\begin{enumerate}
\item If $\mu=0$, by (\ref{W3}) we have $W=\lambda x+\langle a,x\rangle x-\f{|x|^2a}{2}+qx+b$. A direct computation shows that
$$\pp{W_i}{x^j}-\pp{W_j}{x^i}=2\left(a^ix^j-a^jx^i+q_{ij}\right).$$
So $\ud W^\flat=0$ if and only if $a=0$ and $q=0$.
\item If $\mu\neq0$, by (\ref{W3}) $W$ can be reexpressed in a new form as
$$W=\sqrt{1+\mu|x|^2}(\lambda x+a)+qx+b+\mu\langle b,x\rangle x.$$
Note that $a$ and $b$ here are different from that of (\ref{W3}). A direct computation shows that
\begin{eqnarray*}
\pp{W_i}{x^j}-\pp{W_j}{x^i}&=&\frac{2}{(1+\mu|x|^2)^2}\Big\{(1+\mu|x|^2)q_{ij}\\
&&+\mu(q_{jk}x^i-q_{ik}x^j)x^k+\mu(b^jx^i-b^ix^j)\Big\}.
\end{eqnarray*}
Thus, it is easy to see that $\ud W^\flat=0$ if and only if $b=0$ and $q=0$.
\end{enumerate}
\item When $n=2$, by (\ref{W2}) we can see that the corresponding formula of $W^\flat$ is $$W^\flat=\frac{P\ud x_1+Q\ud x_2}{1+\mu(x_1^2+x_2^2)}.$$
    In this case, $\ud W^\flat=0$ is equivalent to the following equality
\begin{eqnarray}
\pp{Q}{x_1}-\pp{P}{x_2}=\frac{2\mu(x_1Q-x_2P)}{1+\mu(x_1^2+x_2^2)}.\label{PQ3}
\end{eqnarray}
By (\ref{PQ1}) and (\ref{PQ3}) we get
\begin{eqnarray}
\pp{P}{x_2}=-\frac{\mu x_1x_2}{1+\mu
x_1^2}\pp{Q}{x_2}-\frac{\mu(x_1Q-x_2P)}{1+\mu(x_1^2+x_2^2)},\label{eqn:36}\\
\pp{Q}{x_1}=-\frac{\mu x_1x_2}{1+\mu
x_2^2}\pp{P}{x_1}+\frac{\mu(x_1Q-x_2P)}{1+\mu(x_1^2+x_2^2)}.\label{eqn:37}
\end{eqnarray}
Set
\begin{eqnarray*}
\tilde P=\frac{(1+\mu x_1^2)P+\mu x_1x_2 Q}{\sqrt{1+\mu(x_1^2+x_2^2)}},\quad
\tilde Q=\frac{(1+\mu x_2^2)Q+\mu x_1x_2 P}{\sqrt{1+\mu(x_1^2+x_2^2)}}.
\end{eqnarray*}
\newpage
Direct computations yields
\begin{eqnarray*}
\pp{\tilde
P}{x_1}&=&\frac{1}{\left[1+\mu(x_1^2+x_2^2)\right]^\frac{3}{2}}\Bigg\{\left[1+\mu(x_1^2+x_2^2)\right]\left[(1+\mu
x_1^2)\pp{P}{x_1}+\mu x_1x_2\pp{Q}{x_1}\right]\\
&&+\mu\left[1+\mu(x_1^2+x_2^2)\right](x_1P+x_2Q)+\mu^2x_1x_2(x_2P-x_1Q)\Bigg\},\\
\pp{\tilde Q}{x_2}&=&\frac{1}{\left[1+\mu(x_1^2+x_2^2)\right]^\frac{3}{2}}\Bigg\{\left[1+\mu(x_1^2+x_2^2)\right]\left[(1+\mu
x_2^2)\pp{Q}{x_2}+\mu x_1x_2\pp{P}{x_2}\right]\\
&&+\mu\left[1+\mu(x_1^2+x_2^2)\right](x_1P+x_2Q)-\mu^2x_1x_2(x_2P-x_1Q)\Bigg\},\\
\pp{\tilde P}{x_2}
&=&\frac{1}{\left[1+\mu(x_1^2+x_2^2)\right]^\frac{3}{2}}\Bigg\{\left[1+\mu(x_1^2+x_2^2)\right]\left[(1+\mu
x_1^2)\pp{P}{x_2}+\mu x_1x_2\pp{Q}{x_2}\right]\\
&&+\mu(1+\mu x_1^2)(x_1Q-x_2P)\Bigg\},\\
\pp{\tilde
Q}{x_1}&=&\frac{1}{\left[1+\mu(x_1^2+x_2^2)\right]^\frac{3}{2}}\Bigg\{\left[1+\mu(x_1^2+x_2^2)\right]\left[(1+\mu
x_2^2)\pp{Q}{x_1}+\mu x_1x_2\pp{P}{x_1}\right]\\
&&-\mu(1+\mu x_2^2)(x_1Q-x_2P)\Bigg\}.
\end{eqnarray*}
Then (\ref{PQ2}), (\ref{PQ3}), (\ref{eqn:36}), (\ref{eqn:37}) and the above equalities imply
\begin{eqnarray}
\pp{\tilde P}{x_1}=\pp{\tilde Q}{x_2},\qquad\pp{\tilde P}{x_2}=\pp{\tilde Q}{x_1}=0.
\end{eqnarray}
\end{enumerate}
So $\tilde P_1=\lambda x_1+a_1$ and $\tilde P_2=\lambda x_2+a_2$ for some constant numbers $\lambda$, $a_1$ and $a_2$, which implies that $W=\lambda x+a$ with $a=(a_1,a_2)$.
\end{proof}

\noindent Changtao Yu\\
School of Mathematical Sciences, South China Normal
University, Guangzhou, 510631, P. R. China\\
aizhenli@gmail.com

\end{document}